\newcommand{\e}{\mathrm{e}}  
\newcommand{\D}{\mathcal{D}}  
\newcommand{\R}{\mathbb{R}}  
\theoremstyle{plain}
\newtheorem{theorem}{Theorem}
\newtheorem{proposition}{Proposition}
\newtheorem{lemma}{Lemma}
\newtheorem{claim}{Claim}
\theoremstyle{definition}
\theoremstyle{remark}
\newtheorem{remark}{Remark}
\numberwithin{equation}{section}
\def\ps@pprintTitle{%
 \let\@oddhead\@empty
 \let\@evenhead\@empty
 \def\@oddfoot{}%
 \let\@evenfoot\@oddfoot}
\begin{document}

\title{Regular solutions to a supercritical elliptic problem in exterior domains\footnote{NOTICE: this is the author's version of a work that was accepted for publication. Changes resulting from the publishing process, such as peer review, editing, corrections, structural formatting, and other quality control mechanisms may not be reflected in this document. Changes may have been made to this work since it was submitted for publication. A definitive version was subsequently published in J. Differential Equ., \textbf{255} (2013), no. 4, 701--727, \url{http://dx.doi.org/10.1016/j.jde.2013.04.024}}}

\author[JD]{Juan D\'avila} 
\address[JD]{Departamento de Ingenier\'{\i}a Matem\'atica and CMM, Universidad
de Chile, Casilla 170 Correo 3, Santiago, Chile}
\ead{jdavila@dim.uchile.cl}

\author[LL]{Luis F. L\'{o}pez}
\address[LL]{Departamento de Ingenier\'{\i}a Matem\'atica, Universidad
de Chile, Casilla 170 Correo 3, Santiago, Chile}
\ead{llopez@dim.uchile.cl}


\begin{abstract}
We consider the supercritical elliptic problem $-\Delta u=\lambda\e^u$, $\lambda>0$, in an exterior domain $\Omega=\R^N\setminus\D$ under zero Dirichlet condition, where $\D$ is smooth and bounded in $\R^N, N\geq 3$. We prove that, for $\lambda$ small, this problem admits infinitely many regular solutions.  
\end{abstract}

\begin{keyword}
Nonlinear elliptic problem \sep supercritical problem \sep exterior domains \sep Lyapunov-Schmidt reduction
\end{keyword}

\maketitle


\section{Introduction and main results}
Let $\D$ be a bounded, smooth domain in $\R^N$, $N\geq 3$. In this paper we consider the problem of finding classical solutions of
\begin{align}
-\Delta u &=\lambda\e^u \quad \text{in}\ \mathbb{R}^N\setminus\overline{\D},\label{1}\\
u&=0 \qquad\text{on}\ \partial \D,
\label{2}
\end{align}
where $\lambda>0$ is a parameter.

V\'eron and Matano \cite[Theorem~3.1]{Ver1992} considered Eq.~\eqref{1} in dimension 3 with $\D=B_1(0)$, the unit ball, and a non-homogeneous boundary condition $u = \phi$ on $\partial B_1(0)$. They proved that if $\phi$ is close enough to $2 w - \log(\lambda/2)$ where $w$ is a smooth solution of 
\begin{align}
\label{eqs2}
\Delta_{S^2} w + \e^{2w}-1=0
\end{align}
on the sphere $S^2$, then there is a solution with the asymptotic behavior
$$
u(x) = -2\log|x|-\log(\lambda/2) + 2 w(x/|x|) + o (1)
$$
as $|x|\to+\infty$. This result is based on the understanding of the solutions of \eqref{eqs2} obtained in \cite{CY1987} (see also \cite{BV1991}).

In all dimensions $N\geq 3$ and still with  $\D=B_1(0)$ we can also describe many radial solutions of \eqref{1}--\eqref{2}. To fix ideas, we denote by $U$ the unique radial solution of 
\begin{align}
-\Delta U &=\lambda_0\e^U \quad  \text{in } \R^N,\label{6}\\
U(0)&=0, \label{7}
\end{align}
where $\lambda_0:=2(N-2)$.
This solution can be constructed by solving the initial value problem \eqref{6} with $U(0)=U'(0)=0$, and can be proved to be defined for all $r>0$.
For any $\alpha>0$ the function 
\begin{equation}
U_\alpha=U(\alpha r)+2\log\alpha \label{8}
\end{equation} 
also satisfies \eqref{6}.
Note that 
for $\lambda>0$, $u = U_\alpha - \log(\frac{\lambda}{\lambda_0})$
satisfies 
$$
-\Delta u= \lambda \e^u\quad\text{in}\ \R^N .
$$
The boundary condition \eqref{2} is fulfilled if  $0 = U(\alpha) + 2\log\alpha- \log(\frac{\lambda}{\lambda_0})$. 

Regarding  $\alpha>0$ as a parameter we  find a family of solutions of \eqref{1}--\eqref{2} of the form $\lambda_\alpha = \lambda_0 \alpha^2 \e^{U(\alpha)}$ and $u_\alpha(r) = U_\alpha(r) - U_\alpha(1)$.
As $\alpha\to 0$ we see that $\lambda_\alpha\to 0$, while $\lambda_\alpha\to\lambda_0$  as $\alpha \to +\infty$, which follows from the asymptotic behavior of $U(r)= -2\log(r)+o(1)$ as $r\to+\infty$.
Let us point out that the family of solutions
$(\lambda_\alpha,u_\alpha)$ with $\alpha >0$ also describes all  classical solutions of the  problem
$$
-\Delta u = \lambda \e^u \quad \text{in } B_1(0), \quad
u=0\quad\text{on } \partial B_1(0),
$$
with $\lambda >0$,
which was studied in dimension 3 in \cite{Gel1959} and later in all higher dimensions in \cite{JL1972}.

Still in the case $\D=B_1(0)$ one can see that the set of classical solutions of \eqref{1}--\eqref{2} is much richer than the family given by 
$(\lambda_\alpha,u_\alpha)$ with $\alpha >0$. To see this it is convenient
to work with Emden-Fowler change of variables 
\begin{equation}
\label{3c}
v(s)=u(r),\ r=\e^s, 
\end{equation}
which transform \eqref{1} into 
\begin{equation}\label{4z}
v''+(N-2)v'=-\lambda\e^{v+2s}\quad\text{in}\ \R,
\end{equation} 
and then define 
\begin{equation}\label{4a}
v_1=\lambda\e^{v+2s}, \quad 
v_2=v'.	
\end{equation}
This transforms \eqref{4z} into the autonomous system
\begin{equation}\label{4bz}
\begin{pmatrix}
v_1\\v_2
\end{pmatrix}'
=
\begin{pmatrix}
v_1(v_2+2)\\
-v_1-(N-2)v_2
\end{pmatrix}.
\end{equation}  
This system has two stationary points: $(0,0)$ and $(2(N-2),-2)$, the first being a saddle point and the second an spiral or an asymptotically stable node depending on the dimension. 

The solution $U$ of \eqref{6}--\eqref{7} corresponds to a heteroclinic orbit which connects the equilibria $(0,0)$ and $(2(N-2),-2)$ in the phase plane $(v_1,v_2)$. Take any point $P= (\lambda,\beta)$ in this orbit. Then for any $\tilde P = (\lambda,\tilde \beta)$ sufficiently close to $P$ the solution of \eqref{4bz} with initial condition $\tilde P$ at time $s=0$ will be defined for all positive $s$ and will converge to $(2(N-2),-2)$ as $s\to+\infty$. Under the previous change of variables this will be a solution of \eqref{1}--\eqref{2} associated to the same parameter $\lambda$. Note that $\lambda = \lambda_\alpha$ for some $\alpha>0$. The previous discussion shows that together with the {\em special} solution $(\lambda_\alpha,u_\alpha)$ there is a continuum of other solutions of \eqref{1}--\eqref{2} with the same $\lambda_\alpha$, and all share the behavior $u(r) = - 2\log(r)-\log(\frac{\lambda}{2(N-2)}) + o(1) $ as $r\to+\infty$.

The purpose of this paper is to show that  part of the family of solutions described in the preceding paragraph in the radial setting still exists for general exterior domains $\mathbb{R}^N\setminus\overline{\D}$. 
For the precise statement of our result, we need to distinguish between the cases $N\geq 4$ and $N=3$. 

\begin{theorem}\label{4d}
Assume $N\geq 4$. Let $\alpha>0$ and $\xi\in\R^N$. Then for sufficiently small $\lambda>0$ there is a solution $u_\lambda$  to problem \eqref{1}--\eqref{2} such that 
\[u_\lambda\to U_\alpha(\xi)(1-\varphi_0(x))\quad\text{as}\ \lambda\to 0^+,\]
uniformly on bounded subsets of $\R^N\setminus\D$, where $\varphi_0$ is the Newtonian potential of the layer $\partial\D$ (see \eqref{9a}), and has the asymptotic behavior \[u_\lambda(x)=-2\log|x|-\log\left(\frac{\lambda}{2(N-2)}\right)+O(|x|^{-\beta})\quad\text{as}\ |x|\to+\infty,\] where $\beta$ is a positive number (see \eqref{11d}). 
\end{theorem}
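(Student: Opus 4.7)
The plan is a Lyapunov--Schmidt reduction centred on the ansatz
\[W_{\alpha,\xi,\lambda}(x):=U_\alpha\left(\sqrt{\lambda/\lambda_0}\,x-\xi\right),\]
which is an exact entire solution of $-\Delta u=\lambda\e^u$ in $\R^N$: the scaling factor $\sqrt{\lambda/\lambda_0}$ precisely absorbs the coupling constant since $-\Delta U_\alpha=\lambda_0\e^{U_\alpha}$. From $U(r)=-2\log r+o(1)$ at infinity one reads off the required tail $W_{\alpha,\xi,\lambda}(x)=-2\log|x|-\log(\lambda/\lambda_0)+o(1)$ as $|x|\to\infty$; and for $x$ in a bounded subset of $\R^N\setminus\D$, $\sqrt{\lambda/\lambda_0}\,x\to 0$ as $\lambda\to 0^+$, so $W_{\alpha,\xi,\lambda}(x)\to U_\alpha(-\xi)=U_\alpha(\xi)$ by the radiality of $U$. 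To enforce the Dirichlet condition I replace $W$ by the projected ansatz
\[V_{\alpha,\xi,\lambda}(x):=W_{\alpha,\xi,\lambda}(x)-H_{\alpha,\xi,\lambda}(x),\]
where $H_{\alpha,\xi,\lambda}$ is the harmonic function in $\R^N\setminus\D$ with boundary value $W_{\alpha,\xi,\lambda}|_{\partial\D}$ that vanishes at infinity. Since $W|_{\partial\D}\to U_\alpha(\xi)$ uniformly as $\lambda\to 0^+$, $H\to U_\alpha(\xi)\varphi_0$, and hence $V\to U_\alpha(\xi)(1-\varphi_0)$ on bounded subsets of $\R^N\setminus\D$, matching the limit in the statement.

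Writing $u_\lambda=V_{\alpha,\xi,\lambda}+\phi$ turns \eqref{1}--\eqref{2} into the fixed-point problem
\[L_\lambda\phi=-E_\lambda+\mathcal{N}_\lambda(\phi),\qquad \phi|_{\partial\D}=0,\ \phi\to 0\text{ at }\infty,\]
where $L_\lambda\phi:=-\Delta\phi-\lambda\e^V\phi$ is the linearisation, the error is $E_\lambda:=-\Delta V-\lambda\e^V=\lambda\e^W(1-\e^{-H})$ (using $\Delta H=0$), and $\mathcal{N}_\lambda(\phi):=\lambda\e^V(\e^\phi-1-\phi)$ is the nonlinear remainder.

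From here I would proceed in four technical steps. \emph{(i)}~Estimate $E_\lambda$ in a weighted norm adapted to the two scales: $\e^W$ concentrates near the bubble centre $\sqrt{\lambda_0/\lambda}\,\xi$, which escapes to infinity as $\lambda\to 0^+$, and at that point $|H|=O(\lambda^{(N-2)/2})$ because the centre is far from $\D$ and $\varphi_0(x)=O(|x|^{-(N-2)})$ at infinity. \emph{(ii)}~Develop a linear theory for $L_\lambda$ in weighted $L^\infty$ or $L^p$ spaces that simultaneously encode the exterior-harmonic decay $|x|^{-(N-2)}$ near $\D$ and the logarithmic growth of $V$ at infinity; this should produce a right inverse modulo the $(N+1)$-dimensional approximate kernel spanned by the scaling mode $\partial_\alpha W_{\alpha,\xi,\lambda}$ and the translation modes $\partial_{\xi_i}W_{\alpha,\xi,\lambda}$, $i=1,\dots,N$, which are zero modes of the linearisation around $U_\alpha$ in all of $\R^N$. \emph{(iii)}~Solve the projected nonlinear equation by a contraction mapping on a small ball of $\phi$'s in the weighted norm. \emph{(iv)}~Use the free parameters $(\alpha,\xi)$ to cancel the $(N+1)$ Lagrange multipliers and produce a genuine solution; a dominance argument shows that the adjusted parameters differ from the prescribed $(\alpha,\xi)$ by $o(1)$ as $\lambda\to 0^+$, so the limit of $u_\lambda$ is preserved. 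The expansion $u_\lambda(x)=-2\log|x|-\log(\lambda/\lambda_0)+O(|x|^{-\beta})$ then follows from the explicit form of $V$ together with the weighted decay of $\phi$, the exponent $\beta$ being controlled by the $|x|^{-(N-2)}$ decay of $H$.

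The hard part is step~\emph{(ii)}: $L_\lambda$ degenerates in two disjoint regions at genuinely different scales, and the weighted framework has to be compatible with both simultaneously while keeping the norms of $E_\lambda$ and of the inverse of $L_\lambda$ uniform in $\lambda$. The restriction $N\ge 4$ is needed precisely because the capacitary correction $H$ and the bubble profile balance favourably only in this range; dimension three requires a modified ansatz with logarithmic corrections, which the paper presumably treats as a separate result.
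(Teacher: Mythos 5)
Your ansatz $W_{\alpha,\xi,\lambda}$, the harmonic correction $H$, the error $E_\lambda=\lambda\e^W(1-\e^{-H})$, and the limit $V\to U_\alpha(\xi)(1-\varphi_0)$ are all correct and match the paper up to a change of variables (the paper rescales so that the domain shrinks and the operator is fixed, $\Delta+\lambda_0\e^{U_\alpha}$, which is the same thing). But your step \emph{(ii)}--\emph{(iv)} is where the proposal departs from what the theorem actually claims, and the departure is a genuine gap rather than just a different route.

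You propose a Lyapunov--Schmidt reduction: build a right inverse of $L_\lambda$ \emph{modulo} the $(N+1)$-dimensional span of the dilation mode $\partial_\alpha W$ and the translation modes $\partial_{\xi_i}W$, then use the free parameters $(\alpha,\xi)$ to kill the resulting Lagrange multipliers. For $N\geq 4$ this is both unnecessary and, as stated, incompatible with the theorem. The theorem asserts a solution for \emph{every} prescribed $(\alpha,\xi)$; if $(\alpha,\xi)$ are consumed as adjustable parameters in the reduction, you produce a solution for \emph{some} nearby $(\alpha',\xi')$, not for the pair you fixed at the outset, and the size of the resulting family collapses. The paper's actual mechanism is that for $N\geq 4$ the linearized operator $\Delta+\lambda_0\e^{U_\alpha}$ acting on the weighted space $\|\phi\|_{*,\xi}<\infty$ is \emph{surjective onto} $\{\|h\|_{**,\xi}<\infty\}$ with a right inverse bounded uniformly in $\lambda$ (Propositions~\ref{12b} and \ref{11c}); the dilation and translation kernel elements are present but do not obstruct solvability (the operator has positive index in the chosen weights), so one runs a contraction directly on \eqref{10} without any orthogonality conditions, leaving $(\alpha,\xi)$ as genuine free parameters of the constructed family. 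This is the content of the mode-by-mode ODE analysis in Section~4: the indicial roots at infinity for the translation modes $k=1,\dots,N$ are $\mu_k^+=N-3$ and $\mu_k^-=1$, both $\geq 1>\beta$ when $N\geq 4$, so a decaying particular solution exists for any right-hand side. Only when $N=3$ does $\mu_k^+=0$ and the translation modes cease to decay, destroying surjectivity exactly in those three directions; there the paper introduces precisely three Lagrange multipliers and adjusts only $\xi\in\R^3$, not $\alpha$ (Section~7 / Theorem~\ref{42a}). Thus your ``$(N+1)$-dimensional approximate kernel'' is overcounted even in dimension three, and your diagnosis that the $N\geq 4$ restriction is a balance between the capacitary correction $H$ and the bubble is incorrect --- it is a statement about the mapping properties of the linearization at $U_\alpha$ in $\R^N$, completely independent of $\D$ and $H$.

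A second, smaller issue: by retaining $\alpha$ as an adjustable parameter you would face the lack of integrability of the pairing between $\partial_\alpha W$ and itself (the scaling mode is only $O(r^{-(N-2)/2})$, or $O(r^{-\mu_0^-})$ for $N\geq 10$, and its $L^2$ pairing against the nonlinearity needs care); the paper avoids this entirely because $\alpha$ is never adjusted, and even in $N=3$ the radial mode is handled by the same decaying-solution argument as higher dimensions. If you want to make a reduction argument work, you should first check that the kernel of the linearization in the weighted spaces actually causes a cokernel --- it does not for $N\geq 4$, which is the crux of why the theorem holds with $(\alpha,\xi)$ prescribed rather than solved for.
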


The  analysis of the radial case suggests looking for a solution $u_\lambda$ close to a rescaled and translated form of $U_\alpha$. It turns out that  $U_\alpha(\xi+\sqrt{\lambda/\lambda_0}x)$, where $\xi\in\R^N$ is fixed arbitrarily, is a good approximation. This leads us to construct an inverse of the linearized operator $-\Delta u+\lambda_0\e^{U_\alpha}$ in $\R^N\setminus(\xi+\sqrt{\lambda/\lambda_0}\D)$. This set approaches  $\R^N$ as $\lambda\to 0^+$, so such an inverse is constructed as a small perturbation of an inverse of this operator in entire space. This inverse indeed exists for $N\geq 4$ and adding a lower order correction to the initial approximation yields the desired solution. 
In dimension 3, however, the linearized operator is not surjective, having a range orthogonal to the generator of translations.
Thus for $N=3$ we find a family of solutions provided $\xi$ is adjusted properly. This explains the next result. 

\begin{theorem}\label{42a}
Let $\alpha>0$. Then there exist $\Lambda,Z>0$ such that for $0<\lambda<\Lambda$ there are $\xi_\lambda\in\R^3$ with $|\xi_\lambda|<Z$ and a solution $u_\lambda$ to problem \eqref{1}--\eqref{2} such that 
\[
u_{\lambda}(x)-U_\alpha(\xi_\lambda)(1-\varphi_0(x))\to 0\quad\text{as}\ \lambda\to 0^+,
\]
uniformly on bounded subsets of $\R^3\setminus\D$ (see \eqref{9a} for $\varphi_0$). Moreover, $u_\lambda$ has the behavior
\[
u_\lambda(x)=-2\log|x|-\log\left(\frac{\lambda}{2(N-2)}\right)+O(|x|^{-\beta})\quad\text{as}\ |x|\to+\infty,
\]
where $\beta\in(0,1/2)$. 
\end{theorem}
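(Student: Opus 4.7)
The plan mirrors the Lyapunov--Schmidt reduction underlying Theorem~\ref{4d}, but — because the linearization has a nontrivial cokernel in dimension three — the translation parameter $\xi$ must be tuned in a final finite-dimensional step so that the obstruction vanishes.

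Given $\alpha>0$ and $\xi\in\R^3$, I would first construct an approximate solution
\[
V_{\xi,\lambda}(x) = U_\alpha\bigl(\xi + \sqrt{\lambda/\lambda_0}\,x\bigr) - U_\alpha(\xi)\,\varphi_0(x) + \psi_{\xi,\lambda}(x),
\]
where $\psi_{\xi,\lambda}$ is a harmonic correction in $\R^3\setminus\D$ of order $O(\sqrt{\lambda})$ chosen so that $V_{\xi,\lambda}=0$ on $\partial\D$. The leading term $U_\alpha(\xi+\sqrt{\lambda/\lambda_0}\,x)$ already solves $-\Delta u = \lambda\,\e^{u}$ on $\R^3$ exactly, so the full error in the equation comes only from the boundary corrections. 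By construction $V_{\xi,\lambda}$ converges to $U_\alpha(\xi)(1-\varphi_0)$ on compact sets and has the prescribed logarithmic behaviour at infinity with an $O(|x|^{-\beta})$ remainder, inherited from the decay of $U(r)+2\log r$ in dimension three (which is of order $r^{-1/2}$ up to an oscillation).

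Writing $u_\lambda=V_{\xi,\lambda}+\phi$ and rescaling $y=\sqrt{\lambda/\lambda_0}\,x$, the equation for $\phi$ becomes a perturbation of
\[
L\phi := -\Delta\phi + \lambda_0\,\e^{U_\alpha}\phi = R_\lambda(\phi,\xi)
\]
on an exterior domain invading $\R^3$, with $R_\lambda$ absorbing the approximation error and the nonlinear $O(\phi^2)$ terms. The crucial linear step is to invert $L$ in a weighted $L^\infty$ space with decay $|y|^{-\beta}$, $\beta\in(0,1/2)$. In this space $L$ is invertible modulo the three translation modes $Z_i=\partial_i U_\alpha$ by standard nondegeneracy results for $U$ on $\R^3$, and a Fredholm perturbation argument extends this right-inverse to the shrinking-hole exterior domain for small $\lambda$. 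A contraction mapping then produces, for each $\xi$ in a large ball $\{|\xi|<Z\}$ and each small $\lambda$, a unique small $\phi=\phi_{\xi,\lambda}$ solving the projected nonlinear problem together with Lagrange multipliers $c_i(\xi,\lambda)$ for $i=1,2,3$.

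The remaining task — and the main obstacle — is to solve the reduced equation $c_i(\xi,\lambda)=0$ for every $i$. I would expect an expansion $c_i(\xi,\lambda) = F_i(\xi) + o(1)$ as $\lambda\to 0^+$, where $F=(F_1,F_2,F_3):\R^3\to\R^3$ is the smooth vector field obtained by pairing the leading approximation error against each $Z_i$, and which heuristically is of gradient form $F = -\nabla_\xi\Phi(\xi)$ for some coercive functional $\Phi$ built explicitly from $U_\alpha$ and $\varphi_0$. Coercivity would place a critical point of $\Phi$ inside some ball $\{|\xi|<Z\}$, and a Brouwer degree argument then transfers this zero to the perturbed map $F+o(1)$, yielding $\xi_\lambda$. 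The restriction $\beta\in(0,1/2)$ reflects the borderline $r^{-1/2}$ decay of $U(r)+2\log r$ in dimension three, which caps the admissible range of weighted norms and propagates to the final asymptotic rate for $u_\lambda$.
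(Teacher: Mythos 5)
Your proposal follows essentially the same Lyapunov--Schmidt scheme as the paper: rescale to a shrinking hole, correct the approximation to satisfy the boundary condition, invert the linearization on a weighted space modulo the three translation modes $\partial_i U_\alpha$, solve the projected problem by contraction, and then kill the Lagrange multipliers $c_i(\lambda,\xi)$ by degree theory in $\xi$. Two small points deserve correction. First, the expansion ``$c_i(\xi,\lambda)=F_i(\xi)+o(1)$'' is mis-normalized: from the fixed-point estimate the $c_i$ already vanish like $\lambda^{\gamma}$ as $\lambda\to 0^+$ for \emph{every} $\xi$, so at face value $F\equiv 0$ and degree theory gives nothing. One must divide by the correct power $\lambda^{1/2}$; the paper shows $c_i(\lambda,\xi)$ is (up to the invertible Gram matrix) equal to the quantity $G_i(\xi)$ in \eqref{49}, and then establishes the expansion \eqref{49a}, namely $G_j(\xi)=f_0\,\lambda^{1/2}\int_{\R^3}|x-\xi|^{-1}\e^{U_\alpha}\partial_j U_\alpha\,\mathrm{d}x+o(\lambda^{1/2})$, where $f_0$ is the Newtonian capacity constant of $\varphi_0$ from \eqref{9b}. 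Second, the ``coercive potential'' picture can be made completely explicit here: the integral above equals (up to a positive constant) $\partial_j U_\alpha(\xi)$, because $|x-\xi|^{-1}\ast\e^{U_\alpha}$ and $U_\alpha/\lambda_0$ are both radial Newtonian potentials of $\e^{U_\alpha}$ that are regular at the origin, hence differ by a constant. Since $U_\alpha$ is radial and strictly decreasing, $G(\xi)\cdot\xi<0$ on $|\xi|=R$ for small fixed $R$, and the degree argument goes through directly; no abstract coercivity statement is needed, and $\varphi_0$ enters only through the positive scalar $f_0$. With those two adjustments your outline matches the paper's proof.
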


In summary, the difference between the cases $N=3$ and $N\geq 4$ is that in the former case, the solutions found constitute a one-parameter family only dependent on $\alpha>0$, while in the latter case is an $(N+1)$-dimensional family depending on $\alpha>0$ and $\xi$.

Theorem~\ref{42a} is similar to the one of V\'eron and Matano \cite[Theorem~3.1]{Ver1992} where we choose to work with the solution $w=0$ of \eqref{eqs2}, but we obtain existence for a general bounded smooth domain $\D$.

Similar phenomenon to the one presented in this work for the exponential nonlinearity was detected for a supercritical equation in exterior domains in \cite{DPM2007,DPMW2008} and for a supercritical Schr\"odinger equation in entire space, with a rapidly decaying potential in \cite{DPMW2007}. 


\section{Preliminaries}
Let us make some additional comments on \eqref{6}--\eqref{7}  and system \eqref{4bz}. As we mentioned, the solution $U$ to \eqref{6}--\eqref{7} can be obtained using the Picard fixed point theorem applied to the equivalent integral equation
\[
u(r)=-\lambda\int_0^r\int_0^s\left(\dfrac{t}{s}\right)^{N-1}\e^{u(t)}\,\mathrm{d}t\,\mathrm{d}s
\]
on a maximal interval $(0,T)$. We can show that in fact $T=+\infty$ by observing that $v$, defined in \eqref{3c}, remains bounded in $(-\infty,\log T)$. Indeed, note that the  Lyapunov function
$$\mathcal{L}(v)=\dfrac{(v'+2)^2}{2}+\lambda\e^{v+2s}-2(N-2)(v+2s)$$
decreases.

\subsection{Phase plane analysis}
Recall that the system \eqref{4bz} has two stationary points $(0,0)$ and $(\lambda_0,-2)$, where $\lambda_0=2(N-2)$. If we linearize around the second point we have the associated eigenvalues
\begin{equation}\label{5}
\mu_\pm=-\dfrac{N-2}{2}\pm\dfrac{1}{2}\sqrt{(N-2)(N-10)}.
\end{equation}
If $3\leq N\leq 9$, \eqref{5} gives complex values and $(\lambda_0,-2)$ is a spiral point. 
If $N\geq 10$, \eqref{5} gives negative eigenvalues and $(\lambda_0,-2)$ is an asymptotically stable node. 

We can get an expression of $U$ and $U'$ in terms of $v_1$ and $v_2$, namely
\begin{equation}\label{7a}
\e^{U(r)}=\frac{v_1(\log r)}{\lambda_0}r^{-2},\quad U'(r)=r^{-1}v_2(\log r),
\end{equation}
which implies that $U(r) = -2\log(r) + o(1)$ as $r\to+\infty$. This behavior is actually common to all the radial solutions of \eqref{6}.

In dimensions $3\leq N\leq 9$ the behavior of $U(r)$ is oscillatory around the singular solution $-2\log(r)$. Instead, if $N\geq 10$ then $U(r)<-2\log(r)$ for all $r>0$, since it can be shown that $v_1(s)<\lambda_0$ for all $s\in\R$, as the next result shows.

\begin{claim}\label{7b}
Suppose that $N\geq 10$. Then
\[
0<v_1(s)<\lambda_0,\quad -2<v_2(s)<0,\quad\text{for all } s\in\R.
\]
\end{claim}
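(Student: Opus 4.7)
I plan to prove the three bounds $v_1>0$, $v_2<0$, and the simultaneous pair $-2<v_2$, $v_1<\lambda_0$ by phase-plane arguments, exploiting that $N\ge 10$ makes the equilibrium $(\lambda_0,-2)$ of \eqref{4bz} a (stable) node rather than a spiral. The inequality $v_1>0$ is immediate from $v_1=\lambda\e^{v+2s}$. For $v_2<0$, a short Taylor expansion of $U$ at the origin gives $v_2<0$ as $s\to-\infty$, and at any prospective first zero $s_0$ of $v_2$ one would have $v_2'(s_0)=-v_1(s_0)<0$, contradicting an upward crossing. The harder part, to which most of the argument is devoted, is the simultaneous bound $v_1<\lambda_0$ and $v_2>-2$.

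I would pass to the centered variables $w_1=\lambda_0-v_1$ and $\psi=v_2+2$, in which \eqref{4bz} becomes
\[
w_1'=-(\lambda_0-w_1)\psi,\qquad \psi'=w_1-(N-2)\psi,
\]
with $(w_1,\psi)\to(\lambda_0,2)$ as $s\to-\infty$ and $(w_1,\psi)\to(0,0)$ as $s\to+\infty$. The linearization at the origin has eigenvalues $\mu_\pm$ given by \eqref{5}, and for $N\ge 10$ the slopes $s_\pm=-\mu_\pm/\lambda_0$ are the real positive roots of $\lambda_0 s^2-(N-2)s+1=0$. I introduce the wedge
\[
\mathcal{A}=\{(w_1,\psi):\,w_1>0,\ 0<\psi<s_+w_1\}.
\]
On the floor $\{\psi=0,\,w_1>0\}$ one has $\psi'=w_1>0$, which points into $\mathcal{A}$. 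On the slanted boundary $\psi=s_+w_1$, the outward-normal component $-s_+w_1'+\psi'$ of the flow reduces, after substituting $\psi=s_+w_1$ and invoking the defining quadratic for $s_+$, to
\[
w_1\bigl[\lambda_0 s_+^2-(N-2)s_+ +1\bigr]-s_+^2w_1^2=-s_+^2w_1^2<0,
\]
so the flow again points into $\mathcal{A}$, and $\mathcal{A}$ is positively invariant.

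To finish I verify that the orbit enters $\mathcal{A}$ for $s$ sufficiently negative. From $U(0)=U'(0)=0$ and $U''(0)=-\lambda_0/N$ one gets $v_1(s)=\lambda_0\e^{2s}+O(\e^{4s})$ and $v_2(s)=-\tfrac{\lambda_0}{N}\e^{2s}+O(\e^{4s})$, whence $\psi/w_1=1/(N-2)+O(\e^{2s})$. An elementary algebraic inequality equivalent to $(N-6)^2>(N-2)(N-10)$ yields $s_+>1/(N-2)$ for every $N\ge 10$, so the orbit lies in $\mathcal{A}$ once $s$ is sufficiently negative, and positive invariance then traps it there for all larger $s$; this gives $v_1<\lambda_0$ and $v_2>-2$. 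The main obstacle is the boundary computation on $\psi=s_+w_1$, which is engineered precisely so that the quadratic $\lambda_0 s^2-(N-2)s+1$ vanishes at $s_+$; this is also the point where the hypothesis $N\ge 10$ is used, since otherwise $s_\pm$ would be complex and the wedge would not exist.
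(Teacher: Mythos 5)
Your argument is correct and rests on the same geometric mechanism as the paper's proof: trapping the heteroclinic orbit in a wedge with vertex at the equilibrium $(\lambda_0,-2)$, bounded by the horizontal line $v_2=-2$ and by an eigenline of the linearization there. Indeed, since $\mu_+\mu_-=\lambda_0$ your slope $s_+=-\mu_+/\lambda_0$ equals $-1/\mu_-$, so your slanted boundary $\psi=s_+w_1$ is exactly the paper's line $v_1=\lambda_0+2\mu_-+\mu_-v_2$ through $(\lambda_0,-2)$ in the direction $\xi_1=(\mu_-,1)$. The differences are in execution rather than in idea: you pass to the centered variables $(w_1,\psi)$, present the wedge as a positively invariant region, and verify on each boundary ray that the vector field points inward, whereas the paper argues by contradiction at a first crossing time in the original $(v_1,v_2)$ variables. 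Your version is also a bit more complete: the paper asserts that confinement between the two lines suffices for the whole claim, but that alone does not yield $v_2<0$; you supply that bound separately via the initial expansion $U''(0)=-\lambda_0/N$ and a first-zero argument (equivalently, $(r^{N-1}U')'=-\lambda_0 r^{N-1}\e^U<0$ together with $U'(0)=0$ forces $U'<0$), which closes a small step that the paper leaves implicit.
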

\begin{proof}
Associated to the eigenvalue $\mu_+$ in \eqref{5} we have the eigenvector $\xi_1=(\mu_-,1)$. To prove the claim it is enough to show that the curve $(v_1(s),v_2(s)),\ s\in\R$, is between the lines $v_2=-2$ and $v_1=\lambda_0+2\mu_-+\mu_-v_2$ (i.e. the line passing through the point $(\lambda_0,-2)$ in the direction $\xi_1$). 

First suppose, by contradiction, that there exists $s_0\in\R$ such that $0<v_1(s_0)<\lambda_0$ and $v_2(s_0)=-2$. Choose $s_0$ as the smallest one with this property. Recalling \eqref{4a}--\eqref{4bz} and using the minimality condition of $s_0$, we see that 
\[
v'_1(s_0)=0\quad\text{and}\quad v_2'(s_0)\leq 0.
\] 
But, from \eqref{4bz}, we have that $v_2'(s_0)=-v_1(s_0)+2(N-2)>0$, which contradicts the previous. So, if $0<v_1(s)<\lambda_0$ then $v_2(s)>-2$.

On the other hand, suppose, by contradiction, that the curve $(v_1(s),v_2(s))$ crosses the line previously described. So that, there exists $s_0\in\R$ with
\begin{gather}
v_1(s_0)=\mu_-v_2(s_0)+\lambda_0+2\mu_-, \label{7c}\\
\dfrac{1}{\mu_-}\leq\dfrac{v_2'(s_0)}{v_1'(s_0)}.
\end{gather}
 Of course here we are choosing the smallest point where the curves cross each other. Last inequality and \eqref{4bz} yield
 \[
 \dfrac{1}{\mu_-}v_1(v_2+2)\leq-v_1-(N-2)v_2.
 \]
Replacing $v_1$ of \eqref{7c} in the last inequality, we have
 \[
 \mu_-v_2^2+[\mu_-^2+(N-2)\mu_-+\lambda_0+4\mu_-]v_2+2[\mu_-^2+(N-2)\mu_-+\lambda_0]+4\mu_-\geq 0.
 \]
Recalling that $\mu_-$ satisfies 
\[
\mu_-^2+(N-2)\mu_-+\lambda_0=0,
\] 
we deduce that 
\[
v_2^2+4v_2+4\leq 0.
\] 
This contradicts the previous claim. The proof is complete.
\end{proof}

\subsection{Asymptotic behavior} Regarding the function $U$ in \eqref{6}--\eqref{7}, we'll need a further analysis of its asymptotic behavior. In this respect, we have the following result.

\begin{claim}\label{7d}
Let $U$ be the only radial solution to \eqref{6}--\eqref{7} and $v$ be as in \eqref{3c}. Then:
\begin{enumerate}[i)]
\item if $3\leq N\leq 9$, $v(s)=-2s+O(\e^{-\frac{N-2}{2}s})$ as $s\to+\infty$; 
\item if $N=10$, there exist $a\in\R$ and $b<0$ such that 
\[
v(s)=-2s+a\e^{-4s}+bs\e^{-4s}+o(s\e^{-4s})\quad\text{as}\ s\to+\infty;
\]
\item if $N>10$, there exist $a\in\R$ and $b<0$ such that 
\[
v(s)=-2s+a\e^{\mu_-s}+bs\e^{\mu_+s}+o(s\e^{\mu_+s})\quad\text{as}\ s\to+\infty.
\]
\end{enumerate}
\end{claim}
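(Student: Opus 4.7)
The plan is to substitute $w(s) = v(s) + 2s$ in \eqref{4z} with $\lambda = \lambda_0$, turning it into the autonomous ODE
\[
w'' + (N-2)w' + \lambda_0(\e^w - 1) = 0,
\]
with $w(s), w'(s) \to 0$ as $s \to +\infty$ (consequence of the monotone decrease of $\mathcal L$ already recorded, plus the phase-plane information from Claim~\ref{7b} in the case $N\ge 10$). Writing $\e^w - 1 = w + g(w)$ with $g(w) = \tfrac12 w^2 + O(w^3)$, this is the perturbed linear equation
\[
L w = -\lambda_0\, g(w), \qquad L := \partial_s^2 + (N-2)\partial_s + \lambda_0,
\]
whose characteristic roots are exactly the exponents $\mu_\pm$ appearing in \eqref{5}.

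I would then apply variation of parameters against the (forward) decaying fundamental pair of $L$: oscillating factors of rate $(N-2)/2$ in case (i), the degenerate pair $\e^{-4s}, s\e^{-4s}$ in case (ii), and the two exponentials $\e^{\mu_\pm s}$ in case (iii). Because $w(s) \to 0$ one has the representation
\[
w(s) = c_1\phi_1(s) + c_2\phi_2(s) - \lambda_0\int_s^{+\infty} K(s,t)\, g(w(t))\, dt,
\]
with $K$ the Wronskian-based Green kernel; the constants $c_1, c_2$ carry the leading asymptotic information. In case (i) I would run a bootstrap: knowing only $w \to 0$, a first pass gives $|w(s)| \le C\e^{-\delta s}$ for some small $\delta > 0$ coming from the exponential decay of the linearised semigroup plus $g(w) = o(w)$; iterating, the bound $g(w) = O(w^2)$ doubles the decay rate at each step, saturating at $(N-2)/2$ after finitely many steps, which is exactly the statement of (i).

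For cases (ii) and (iii), after establishing a sharp initial exponential decay by the same bootstrap, I would refine by writing $w = a\phi_1 + b\phi_2 + r$ with $r$ of strictly faster decay than $\phi_2$, tracking the resonant contributions produced by the quadratic term $w^2$ in the right-hand side of $Lw = -\lambda_0 g(w)$. The coefficients $a,b$ are then identified as convergent improper integrals against the duals of $\phi_1,\phi_2$. The sign of $b$ is extracted from Claim~\ref{7b}: $v_1(s) < \lambda_0$ means $\e^w = v_1/\lambda_0 < 1$, so $w(s) < 0$ for all $s$; since the leading piece $b\phi_2$ is positive-signed (it equals $s\e^{-4s}$, respectively $s\e^{\mu_+ s}$, for large $s$), this forces $b \le 0$.

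The main obstacle is the strict inequality $b \neq 0$. Vanishing of $b$ would place the orbit of $U$ on the one-dimensional strong-stable manifold of the node $(\lambda_0,-2)$, so one has to verify that the specific trajectory pinned by the Cauchy data $U(0) = U'(0) = 0$ avoids this codimension-one exceptional set. The natural route is to trace the orbit back to $s = -\infty$, where it must emerge from the saddle $(0,0)$ along its unique unstable direction, and then to apply a transversality or shooting argument in the phase plane to rule out that the orbit simultaneously lands on the strong-stable manifold at $(\lambda_0,-2)$. This transversality check, together with the bookkeeping of the resonant contributions in case (iii), is the heart of the proof and is where I expect the genuine work to lie.
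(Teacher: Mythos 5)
Your overall setup coincides with the paper's: the change of variables $w=v+2s$, the equation $Lw=-\lambda_0(\e^w-1-w)$ with $L=\partial_s^2+(N-2)\partial_s+\lambda_0$, variation of parameters against the decaying fundamental pair, and the conclusion $b\le 0$ from $w<0$ via Claim~\ref{7b}. Your bootstrap for case (i) is a reasonable route to the decay rate $\e^{-(N-2)s/2}$, and the paper is silent there (it only writes out the resonant case).

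The genuine gap is exactly where you flag it: $b\neq 0$. The transversality/shooting scheme you sketch is neither carried out nor what the paper does, and it is doubtful it could work as stated --- the orbit of $U$ is one specific trajectory (the unstable manifold of the saddle at the origin), so observing that the strong-stable manifold of $(\lambda_0,-2)$ has codimension one says nothing about this particular orbit; you need a concrete obstruction. The paper produces it by a short direct computation exploiting the convexity inequality $\e^w-1-w\geq 0$. For $N=10$: suppose $b=0$, so $w\e^{4s}=a+w_p\e^{4s}$, and differentiate. The variation-of-parameters formula for $w_p$ gives
\begin{equation*}
(w_p\e^{4s})' \;=\; \int_s^{+\infty}16\e^{4t}\bigl(\e^{w(t)}-1-w(t)\bigr)\,\mathrm{d}t \;\ge\; \int_0^{+\infty}16\e^{4t}\bigl(\e^{w(t)}-1-w(t)\bigr)\,\mathrm{d}t \;>\;0
\end{equation*}
for all $s\le 0$, since the integrand is pointwise nonnegative and not identically zero; on the other hand $(w\e^{4s})'=(w'+4w)\e^{4s}\to 0$ as $s\to-\infty$, because $w\sim 2s$ and $w'\to 2$ there (as $v(s)\to U(0)=0$ and $v'(s)\to 0$). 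Since $b=0$ forces $(w\e^{4s})'=(w_p\e^{4s})'$, this is a contradiction. The idea you are missing is that the definite sign of $\e^w-1-w$ makes the resonant integral a strictly positive, monotone quantity near $-\infty$, a structural fact invisible to a generic transversality argument.
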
 
We shall prove the case of resonance ii), the other cases can be handled similarly.

A preliminary analysis of the autonomous system \eqref{4bz} suggests to consider 
\[
w=v+2s,\quad s\in\R,
\] 
which satisfies 
\begin{equation}\label{7e}
w''+8w'+16w=-16(\e^w-1-w)\quad\text{in}\ \R.
\end{equation}
This implies that there exist constants $a,b\in\R$ such that 
\begin{equation}\label{7f}
w=a\e^{-4s}+bs\e^{-4s}+w_p,
\end{equation}
where $w_p$ is a particular solution of the non-homogeneous equation \eqref{7e} and $a,b$ are numbers depending on $w_p$. 

Following the variation of parameters method, we look for a solution of \eqref{7e} of the form $w_p=u_1\e^{-4s}+u_2s\e^{-4s}$, where
\begin{equation*}
u'_1=\frac{\begin{vmatrix}
0&s\e^{-4s}\\
-16(\e^w-1-w) & \e^{-4s}-4s\e^{-4s}
\end{vmatrix}}{\begin{vmatrix}
\e^{-4s} & s\e^{-4s}\\
-4\e^{-4s} & \e^{-4s}-4s\e^{-4s}
\end{vmatrix}}=16s\e^{4s}(\e^w-1-w)
\end{equation*}
and
\begin{equation*}
u'_2=\frac{\begin{vmatrix}
\e^{-4s}&0\\
-4\e^{-4s}&-16(\e^w-1-w)
\end{vmatrix}}{\begin{vmatrix}
\e^{-4s}&s\e^{-4s}\\
-4\e^{-4s}&\e^{-4s}-4s\e^{-4s}
\end{vmatrix}}=-16\e^{4s}(\e^w-1-w).
\end{equation*}
Considering the expected asymptotic behavior, we choose the particular solution
\begin{equation*}
w_p=-\int_s^{+\infty}16t\e^{4t}(\e^w-1-w)\,\mathrm{d}t\,\e^{-4s}+\int_s^{+\infty}16\e^{4t}(\e^w-1-w)\,\mathrm{d}t\,s\e^{-4s}
\end{equation*}

Claim~\ref{7b} and the asymptotic behavior, as $s\to-\infty$, of $w$ in \eqref{7f} implies that $b\leq 0$. Moreover, $b\neq 0$. Indeed, arguing by contradiction, suppose $b=0$, i.e., 
\[
w=a\e^{-4s}+w_p.
\] 
Multiplying by $e^{4s}$ and differentiating both sides we have
\begin{equation}\label{8a}
(w\e^{4s})'=(w_p\e^{4s})'.
\end{equation}
We can compute right hand side directly from the expression of $w_p$ to obtain that, for all $s\leq 0$,
\[(w_p\e^{4s})'=\int_s^{+\infty}16\e^{4t}(\e^w-1-w)\,\mathrm{d}t\geq\int_0^{+\infty}16\e^{4t}(\e^w-1-w)\,\mathrm{d}t>0.
\]
 On the other hand, 
 \[(w\e^{4s})'=w'\e^{4s}+4w\e^{4s}\to 0\quad\text{as}\ s\to-\infty
 \] 
 (this limit is zero due to the continuity of $U$ and its derivative around the origin). This contradiction implies $b\neq 0$, the proof of the claim is complete.
 

\section{The Method}

The construction that we describe next is motivated by \cite{DPM2007} and \cite{DPMW2008}. Let us consider the change of variables
\[
\widetilde{u}=u\left(\sqrt{\dfrac{\lambda_0}{\lambda}}(x-\xi)\right),
\]
which transform \eqref{1}--\eqref{2} into the equivalent problem
\begin{equation}\label{9}
\left\{\begin{aligned}
-\Delta\widetilde{u}&=\lambda_0\e^{\widetilde{u}} &&\text{in}\ \R^N\setminus\overline{\D}_{\lambda,\xi},\\
\widetilde{u}&=0&&\text{on}\ \partial \D_{\lambda,\xi},
\end{aligned}\right.
\end{equation}
where $\D_{\lambda,\xi}$ is the shrinking domain
\[
\D_{\lambda,\xi}=\xi+\sqrt{\dfrac{\lambda}{\lambda_0}}\D.
\]
The closer $\lambda$ is taken from zero, the ``closer" $\R^N\setminus\overline{\D}_{\lambda,\xi}$ is to $\R^N$, so it is natural to seek for a solution $\widetilde{u}$ in the form of a small perturbation of $U_\alpha$ in \eqref{8}. We need a correction so that the boundary condition is satisfied.  

Let $\varphi_\lambda$ be the solution of the problem
\begin{equation}
 \left\{
  \begin{aligned}
   \Delta\varphi_\lambda&=0 && \text{in } \mathbb{R}^N\setminus\overline{\D}_{\lambda,\xi},\\ \varphi_\lambda(x)&=U_\alpha(x) && \text{on } \partial\D_{\lambda,\xi},\\ \lim_{|x|\to+\infty}&\varphi_\lambda(x)=0. && 
  \end{aligned}
 \right.
\end{equation}
In the same way, consider $\varphi_0$ the function such that 
\begin{equation}\label{9a}
 \left\{
  \begin{aligned}
   \Delta\varphi_0&=0 && \text{in } \mathbb{R}^N\setminus\overline{\D},\\
   \varphi(x)&=1 && \text{on } \partial\D,\\
   \lim_{|x|\to+\infty}&\varphi_0(x)=0.
   \end{aligned}
  \right.
\end{equation}
By the maximum principle, 
 \[
 \varphi_\lambda(x)=(U_\alpha(\xi)+O(\sqrt{\lambda}))\varphi_0\left( \sqrt{\dfrac{\lambda_0}{\lambda}}(x-\xi)\right).
 \]
We also note that
\begin{equation}\label{9b}
f_0:=\lim_{|x|\to+\infty}|x|^{N-2}\varphi_0(x)=\dfrac{1}{(N-2)|S^{N-1}|}\int\limits_{\R^N\setminus\D}|\nabla\varphi_0|^2\,\mathrm{d}x,
\end{equation}
which in particular implies
\begin{equation}\label{9c}
|\varphi_\lambda(x)|\leq C\lambda^{(N-2)/2}|x-\xi|^{2-N}\quad\text{for all}\ x\in\R^N\setminus\D_{\lambda,\xi}.
\end{equation}
The number $\int_{\R^N\setminus\D}|\nabla\varphi_0|^2\,\mathrm{d}x$ is the {\em Newtonian capacity} of $\D$.

Thus we look for a solution to problem \eqref{9} of the form 
\[
\widetilde{u}=U_\alpha-\varphi_\lambda+\phi,
\]
which yields the following equation for $\phi$
\begin{equation}\label{10}
\left\{\begin{aligned}
\Delta\phi+\lambda_0\e^{U_\alpha}\phi&=M(\phi)+E_\lambda&&\text{in}\ \R^N \setminus\overline{\D}_{\lambda,\xi},\\
\phi&=0&&\text{on}\ \partial\D_{\lambda,\xi},
\end{aligned}\right.
\end{equation}
where
\begin{equation}\label{10a}
E_\lambda =\lambda_0\e^{U_\alpha}\varphi_\lambda,\quad M(\phi)=-\lambda_0\e^{U_\alpha}(\e^{\phi-\varphi_\lambda}-1-\phi+\varphi_\lambda).
\end{equation}

\begin{remark}
We emphasize that, until here, this scheme applies to all dimensions $N\geq 3$. However, if $N=3$, in order to solve \eqref{10} we need to choose a special $\xi$, depending on $\lambda$. This will be done in Section~7.  
\end{remark}

For the time being let us consider $N\geq 4$. In order to solve \eqref{10} it is first necessary to construct a bounded right inverse of the linearization of \eqref{1} around $U_\alpha$ in the whole of $\R^N$, this construction is carried out in Section~4. The method has previously been used by Mazzeo and Pacard (see \cite{MP1996}) where solutions with prescribed singular set for subcritical problems are constructed (see \cite{Pac1992} and \cite{Pac1993}).  For small $\lambda>0$ a similar solvability property is established for the  linearized operator around $U_\alpha$ in the exterior domain $\R^N \setminus\overline{\D}_{\lambda,\xi}$. In Section~5 we construct such a bounded right inverse, namely a solution for the linear problem
\begin{equation}\label{11}
\left\{\begin{aligned}
\Delta\phi+\lambda_0\e^{U_\alpha}\phi&=h&&\text{in}\ \R^N \setminus\overline{\D}_{\lambda,\xi},\\
\phi&=0&&\text{on}\ \partial\D_{\lambda,\xi},
\end{aligned}\right.
\end{equation}
for norms on functions $\phi$ and $h$ defined on $\R^N \setminus\overline{\D}_{\lambda,\xi}$ given as follows. For given $0<\sigma<2$ and
\begin{equation}\label{11d}
0<\beta<
\left\{\begin{aligned}
& 1,&& 4\leq N\leq 9,\\
&\min\{\mu_0^-,1\}&&N\geq 10,
\end{aligned}\right.
\end{equation}
where \[\mu_0^-=\dfrac{N-2}{2}-\dfrac{1}{2}\sqrt{(N-2)(N-10)},\] we consider the norms

\begin{gather}
\|\phi\|_{*,\xi}=\|\phi\|_{L^\infty(B_1(\xi))}+\sup_{|x-\xi|\geq 1}|x-\xi|^{\beta}|\phi(x)|,\label{11a}\\
\|h\|_{\ast\ast,\xi}=\sup_{|x-\xi|\leq 1}|x-\xi|^\sigma|h(x)|+\sup_{|x-\xi|\geq 1}|x-\xi|^{2+\beta}|h(x)|.\label{11b}
\end{gather}

In this context there is continuity, as the next result states. 

\begin{proposition}\label{11c}
Assume $N\geq 4$. Then given numbers $\alpha>0$ and $Z>0$, there exist positive constants $C$, $\Lambda$ such that for any $|\xi|\leq Z$ and any $0<\lambda<\Lambda$ the following holds:\\
For any $h$ with $\|h\|_{**,\xi}<+\infty$, there exists a solution of problem \eqref{11} 
\[
\phi=\Psi_\lambda(h),
\] 
which defines a linear operator of $h$ such that 
\[
\|\phi\|_{*,\xi}\leq C\|h\|_{**,\xi}.
\]
\end{proposition}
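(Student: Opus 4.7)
The plan is to solve \eqref{11} by perturbing the entire-space right inverse $T$ constructed in Section~4, for which $(\Delta+\lambda_0\e^{U_\alpha})T(h)=h$ in $\R^N$ and $\|T(h)\|_{*,\xi}\le C\|h\|_{**,\xi}$. Given $h$ on $\R^N\setminus\overline{\D}_{\lambda,\xi}$, I would first extend it by zero to $\R^N$ (this does not increase $\|\cdot\|_{**,\xi}$, since the weight $|x-\xi|^\sigma$ at the added points is harmless) and set $\phi_0:=T(h)$. Because $\D_{\lambda,\xi}\subset B_1(\xi)$ for $\lambda$ small, the trace obeys
\[
\|\phi_0\|_{L^\infty(\partial\D_{\lambda,\xi})}\le\|\phi_0\|_{L^\infty(B_1(\xi))}\le\|\phi_0\|_{*,\xi}\le C\|h\|_{**,\xi}.
\]
To enforce the Dirichlet condition, let $\psi$ be the solution of $\Delta\psi=0$ in $\R^N\setminus\overline{\D}_{\lambda,\xi}$ with $\psi=-\phi_0$ on $\partial\D_{\lambda,\xi}$ and $\psi\to 0$ at infinity; maximum-principle comparison with the rescaled $\varphi_0(\sqrt{\lambda_0/\lambda}(x-\xi))$, together with \eqref{9c}, yields
\[
|\psi(x)|\le C\|h\|_{**,\xi}\,\lambda^{(N-2)/2}|x-\xi|^{2-N},\qquad x\in\R^N\setminus\D_{\lambda,\xi}.
\]
Define $S(h):=\phi_0+\psi$, so that $S(h)=0$ on $\partial\D_{\lambda,\xi}$ and
\[
(\Delta+\lambda_0\e^{U_\alpha})S(h)=h-A(h),\qquad A(h):=-\lambda_0\e^{U_\alpha}\psi.
\]

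The heart of the argument is to show that the operator $A$ has $\|\cdot\|_{**,\xi}$-operator norm $o(1)$ as $\lambda\to 0^+$, uniformly in $|\xi|\le Z$. Using the pointwise bound $\e^{U_\alpha(x)}\le C\min(1,|x-\xi|^{-2})$ (which follows from $U(r)=-2\log r+o(1)$ and boundedness of $|\xi|$), I would split the two halves of $\|\cdot\|_{**,\xi}$. For $|x-\xi|\le 1$, combining the estimate on $\psi$ with this bound gives
\[
|x-\xi|^{\sigma}|A(h)(x)|\le C\lambda^{(N-2)/2}|x-\xi|^{\sigma+2-N}\|h\|_{**,\xi};
\]
since $\sigma<2\le N-2$ the exponent $\sigma+2-N$ is negative, so the sup on $\R^N\setminus\D_{\lambda,\xi}$ is attained at $|x-\xi|\sim\sqrt{\lambda}$, producing a factor $\lambda^{\sigma/2}$. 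For $|x-\xi|\ge 1$ the analogous calculation yields $|x-\xi|^{2+\beta}|A(h)(x)|\le C\lambda^{(N-2)/2}|x-\xi|^{\beta+2-N}\|h\|_{**,\xi}\le C\lambda^{(N-2)/2}\|h\|_{**,\xi}$, because $\beta<1<N-2$. Altogether $\|A(h)\|_{**,\xi}\le C\lambda^{\sigma/2}\|h\|_{**,\xi}$.

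For $\lambda<\Lambda$ sufficiently small, $\operatorname{id}-A$ is therefore invertible on the $\|\cdot\|_{**,\xi}$-weighted space by Neumann series, with $\|(\operatorname{id}-A)^{-1}\|\le 2$. The same type of splitting shows $\|\psi\|_{*,\xi}\le C\|h\|_{**,\xi}$, whence $\|S(h)\|_{*,\xi}\le C\|h\|_{**,\xi}$. Setting $\Psi_\lambda(h):=S\bigl((\operatorname{id}-A)^{-1}h\bigr)$ produces a linear operator that vanishes on $\partial\D_{\lambda,\xi}$, solves \eqref{11} by construction, and satisfies $\|\Psi_\lambda(h)\|_{*,\xi}\le C\|h\|_{**,\xi}$. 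I expect the real obstacle to be the pointwise decay estimate for $\psi$ and its compatibility with the weighted norms: the hypothesis $N\ge 4$ is used precisely so that the factor $\lambda^{(N-2)/2}$ obtained from the harmonic correction dominates the $|x-\xi|^{\sigma+2-N}$ blow-up at the rim of the vanishing hole, a margin that disappears at $N=3$, which is exactly why the translation-orthogonality obstruction must be absorbed there by adjusting $\xi$ as in Section~7.
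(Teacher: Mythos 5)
Your proof is correct, but it takes a genuinely different and arguably more streamlined route than the paper's. The paper (Section 5) decomposes $\phi=\eta_\lambda\varphi+\psi$ using two cut-off functions at scale $\sqrt{\lambda}$, writes a coupled system \eqref{27}--\eqref{28} for the inner piece $\psi$ (solved in the exterior domain) and the outer piece $\varphi$ (solved in $\R^N$ via Proposition~\ref{12b}), and then runs a fixed-point argument in a Banach space $X$ whose norm includes $\lambda^{1/2}\|\nabla\varphi\|_{L^\infty(E_\lambda)}$ precisely to control the commutator terms $\nabla\eta_\lambda\cdot\nabla\varphi$ and $\varphi\Delta\eta_\lambda$. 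You avoid the cut-off calculus entirely: you compose the whole-space inverse $T$ with a purely harmonic boundary correction $\psi$, observe that the resulting defect is the bounded operator $A(h)=-\lambda_0\e^{U_\alpha}\psi$, and absorb it by a Neumann series once you have shown $\|A\|_{**\to**}\le C\lambda^{\sigma/2}$. Because the correction is harmonic, there are no gradient terms to estimate, so no Lipschitz component in the norm is needed. Your barrier estimate $|\psi|\le C\|h\|_{**,\xi}\lambda^{(N-2)/2}|x-\xi|^{2-N}$ is the exact analogue of \eqref{28b} and is justified by comparison with the rescaled capacitary potential $\varphi_0(\sqrt{\lambda_0/\lambda}(x-\xi))$ together with \eqref{9b}--\eqref{9c}; your weighted bounds on $A$ match the computation in \eqref{32}--\eqref{34}. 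Both routes produce the same $O(\lambda^{\sigma/2})$ smallness and close for $\lambda<\Lambda$.

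One small but real misstatement is in your closing paragraph. The inequality $\|A(h)\|_{**,\xi}\le C\lambda^{\min(\sigma,N-2)/2}\|h\|_{**,\xi}$ does \emph{not} lose smallness at $N=3$: there the same computation gives $C\lambda^{\min(\sigma,1)/2}$, which still tends to zero, so the Neumann series would converge just as well. The hypothesis $N\ge 4$ does not enter through the harmonic correction at all. It enters exclusively through Proposition~\ref{12b}, i.e.\ through the existence of the entire-space right inverse $T$: for $N=3$ the indicial root at infinity on the Fourier modes $k=1,2,3$ is $\mu_k^+=N-3=0$ (see \eqref{17a} and Remark~\ref{17d}), so $T$ fails to exist on the range needed without the orthogonality conditions \eqref{17e}/\eqref{42b}. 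That is the obstruction that forces the projected problem \eqref{44} and the adjustment of $\xi$ in Section~7, not a degradation of the capacitary barrier.
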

In Section~6 we use this result and the contraction mapping principle to solve \eqref{10}.


\section{The Operator $\Delta+\lambda_0\e^{U_\alpha}$ in $\R^N$}
Let $U_\alpha$ be a radial solution of \eqref{6}. In this section we study the linear equation 
\begin{equation}\label{12}
 \Delta\phi+\lambda_0\e^{U_\alpha}\phi=h,\quad \text{in}\ \R^N.
\end{equation}
The main result concerns with solvability of this equation and estimates for the solution in the weighted $L^\infty$ norms given by \eqref{11a} and \eqref{11b}. The main result in this section is the following.

\begin{proposition}\label{12b}
Assume $N\geq 4$. Then given $\alpha>0$ and $Z>0$, there exists $C>0$ such that for any $|\xi|\leq Z$ the following holds: For any $h$ with $\|h\|_{**,\xi}<+\infty$, there exists a solution of \eqref{12} \[\phi=\Psi(h),\] which defines a linear operator of $h$ such that
\begin{equation}\label{12c}
\|\phi\|_{\ast,\xi}\leq C\|h\|_{**,\xi}.
\end{equation} 
\end{proposition}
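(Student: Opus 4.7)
The plan is to reduce the PDE on $\R^N$ to a family of second-order ODEs via spherical-harmonic decomposition about the origin (where $U_\alpha$ is centered), to solve each one by variation of parameters with integration limits tuned to the expected decay, and finally to assemble the weighted estimate from mode-by-mode bounds.

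Writing
\[
h(x)=\sum_{k\geq 0}\sum_j h_{k,j}(r)Y_{k,j}(\theta),\qquad
\phi(x)=\sum_{k\geq 0}\sum_j \phi_{k,j}(r)Y_{k,j}(\theta),
\]
with $r=|x|$ and $\theta=x/|x|$, each mode satisfies
\[
\mathcal{L}_k\phi_{k,j}:=\phi_{k,j}''+\frac{N-1}{r}\phi_{k,j}'+\Bigl(\lambda_0\e^{U_\alpha(r)}-\frac{k(k+N-2)}{r^2}\Bigr)\phi_{k,j}=h_{k,j}.
\]
The symmetries of the entire-space problem supply explicit homogeneous solutions in the two lowest sectors: $Z_1(r)=U_\alpha'(r)$ for $k=1$, inherited from the translations $\partial_i U_\alpha$, and $Z_0(r)=2+rU_\alpha'(r)$ for $k=0$, inherited from the dilation mode $\partial_\alpha U_\alpha$. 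For $k\geq 2$ no explicit solution is available, but substituting $r^\gamma$ into $\mathcal{L}_k$ using $\lambda_0\e^{U_\alpha}\sim\lambda_0 r^{-2}$ at infinity yields the indicial equation $\gamma^2+(N-2)\gamma+\lambda_0-k(k+N-2)=0$, whose product of roots is $\lambda_0-k(k+N-2)<0$, giving a clean growing/decaying dichotomy. In every sector a second linearly independent solution $Z_k^\ast$ is determined through Abel's formula $W_k(r)=c_k r^{1-N}$.

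The particular solution in each mode is then built by the standard variation-of-parameters formula,
\[
\phi_{k,j}(r)=\frac{1}{c_k}\Bigl(Z_k^\ast(r)\int_{a_k}^{r}Z_k(s)h_{k,j}(s)s^{N-1}\,\mathrm{d}s-Z_k(r)\int_{b_k}^{r}Z_k^\ast(s)h_{k,j}(s)s^{N-1}\,\mathrm{d}s\Bigr),
\]
with $a_k,b_k\in\{0,+\infty\}$ chosen so that the resulting function matches the desired behavior at both endpoints. Near $r=0$, the hypothesis $\sigma<2$ renders $h_{k,j}(s)s^{N-1}$ integrable against the homogeneous solutions, and the centrifugal term $k(k+N-2)/r^2$ produces a Hardy-type gain of order $k^{-2}$ for large $k$. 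Near $r=+\infty$, the precise asymptotics from Claim~\ref{7d} (for $k=0$), the explicit form of $Z_1$ (for $k=1$), and the indicial analysis (for $k\geq 2$) together give a pointwise bound $|\phi_{k,j}(r)|\leq Cr^{-\beta}\|h_{k,j}\|$ whenever $\beta$ lies strictly below the slowest decay exponent of the homogeneous solutions, which is exactly the range \eqref{11d}. Summing over $(k,j)$ in the weighted norms is then justified by Parseval on the sphere and the $k^{-2}$ gain. The shift of the norm center from the origin to $\xi$ is harmless because $|\xi|\leq Z$ makes $|x|$ and $|x-\xi|$ equivalent for $|x|\geq 2Z$, while interior elliptic $L^p$ regularity on $B_1(\xi)$, with $p>N/2$ (admissible thanks to $\sigma<2$), absorbs the singular part of $h$; linearity of $\Psi$ is inherent to the construction.

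The main obstacle will be the analysis of the exceptional sectors $k=0$ and $k=1$, where both independent homogeneous solutions decay at infinity and the growing/decaying dichotomy of the higher modes is lost. Producing a particular solution with decay $r^{-\beta}$ rather than the slower dominant rate requires the correct choice of integration endpoints $a_k,b_k$ together with the refined asymptotics supplied by Claim~\ref{7d}, and it is precisely at this step that the restriction \eqref{11d} on $\beta$ is forced.
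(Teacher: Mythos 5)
Your decomposition into spherical harmonics, your choice of homogeneous solutions ($Z_1=U_\alpha'$ for the degree-one sector, $Z_0=2+rU_\alpha'$ for the radial sector, coming from translation and dilation invariance), and your indicial analysis for $k\geq 2$ all match the paper. However, there are two genuine gaps that the paper handles differently and that your proposal does not resolve.

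First, the summation over modes. You invoke ``Parseval on the sphere and the $k^{-2}$ gain'' to pass from mode-by-mode estimates $\|\phi_k\|_*\leq C_k\|h_k\|_{**}$ to the global $L^\infty$-weighted bound $\|\phi\|_*\leq C\|h\|_{**}$. This does not work. Parseval controls $L^2(S^{N-1})$ sums, whereas the norms \eqref{11a}--\eqref{11b} are sup norms; the spherical harmonics $\Theta_k$ themselves have $\|\Theta_k\|_{L^\infty(S^{N-1})}$ growing polynomially in $k$, so a $k^{-2}$ gain on the coefficients is nowhere near enough to sum the series in $L^\infty$. Worse, the mode-by-mode hypothesis $\|h_k\|_{**}\lesssim\|h\|_{**}$ with constants uniform in $k$ is itself not available from the sup-norm hypothesis on $h$. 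The paper sidesteps this entirely: it first constructs the solution for $h$ with finitely many modes (so the estimate carries a constant $C_m$), and then proves by a compactness/blow-up argument (rescaling $\widetilde\phi_j(y)=|x_j|^\beta\phi_j(|x_j|y)$ and showing the putative limit must vanish) that $C_m$ is bounded independently of $m$. That contradiction argument is the genuinely nontrivial step, and your proposal has no substitute for it.

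Second, the off-center case $\xi\neq 0$. Your claim that the shift is ``harmless'' because $|x|\sim|x-\xi|$ at infinity and interior $L^p$ regularity absorbs the singular part misses the actual obstruction: the Fourier decomposition is about the origin, but $\|h\|_{**,\xi}<+\infty$ allows $h$ to blow up like $|x-\xi|^{-\sigma}$ at $\xi\neq 0$, and such an $h$ need not have finite $\|h\|_{**}$ (origin-centered) at all, so the radial Fourier modes $h_k(r)$ are not controlled and the ODE construction cannot even be started. The paper deals with this by a gluing argument: first solve $\Delta\phi_2+\lambda_0\e^{U_\alpha}(1-\eta)\phi_2=(1-\eta)h$, with $\eta$ vanishing near $\xi$, using coercivity (the $L^{N/2}$ smallness of the truncated potential); then solve the remaining equation for $\phi_1$ with a right-hand side that is supported away from $\xi$ and therefore has finite origin-centered $\|\cdot\|_{**}$ norm. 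That two-step gluing is what makes the shift to $\xi$ legitimate, and it is missing from your outline.
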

To prove this result we first consider $\xi=0$. We denote the corresponding norms by $\|\|_*$ and $\|\|_{**}$.

\subsection{A Right Inverse}
In this subsection we consider $N\geq 4$ as well as $N=3$, pointing out the main differences between both cases.  

The linear operator in \eqref{12} is of regular singular type and it is well known that it is Fredholm on weighted spaces provided the weight does not equal one of indicial roots (see for instance \cite{Maz1991,MP1996,MS1991}). We include the main points of the argument and omit some technical computations. 

Let us write $h$ as 
\begin{equation*}
h(x)=\sum_{k=0}^\infty h_k(r)\Theta_k(\theta),\ r>0,\ \theta\in S^{N-1},
\end{equation*}
where $\Theta_k$, $k\geq 0$ are the eigenfunctions of the Laplace-Beltrami operator $-\Delta_{S^{N-1}}$ on the sphere $S^{N-1}$, normalized so that they constitute an orthonormal system in $L^2(S^{N-1})$. We take $\Theta_0$ to be a positive constant, associated to the eigenvalue 0 and $\Theta_i,\ 1\leq i\leq N$ is an appropriate multiple of $x_i/|x|$ which has eigenvalue $\lambda_i=N-1,\ 1\leq i\leq N$. In general, $\lambda_k$ denotes the eigenvalue associated to $\Theta_k$, we repeat eigenvalues according to their multiplicity and we arrange them in a non-decreasing sequence. We recall that the set of eigenvalues is given by $\{i(N-2+i)\}_{i\geq 0}$.

We look for a solution $\phi$ to \eqref{12} of the form
\[
\phi(x)=\sum_{k=0}^\infty \phi_k(r)\Theta_k(\theta),\ x=r\theta.
\]
Therefore, $\phi$ satisfies \eqref{12} if and only if 
\begin{equation}\label{12a}
\phi_k''+\frac{N-1}{r}\phi_k'+\left(2(N-2)\e^{U_\alpha}-\frac{\lambda_k}{r^2}\right)\phi_k=h_k, 
\end{equation}
for all $r>0$, for all $k\geq 0$.

To construct solutions of this ODE we need to consider two linearly independent solutions $z_{1,k}$, $z_{2,k}$ of the homogeneous equation 
\begin{equation}\label{13}
\phi_k''+\frac{N-1}{r}\phi_k'+\left(2(N-2)\e^{U_\alpha}-\frac{\lambda_k}{r^2}\right)\phi_k=0,\hspace{0.3cm} r>0.
\end{equation}
Once these generators are identified, the general solution of the equation can be written through the variation of parameters formula as 
\[
\phi_k(r)=z_{1,k}(r)\int z_{2,k}h_kr^{N-1}\,\mathrm{d}r-z_{2,k}(r)\int z_{1,k}h_kr^{N-1}\,\mathrm{d}r, 
\]
where the symbol $\int$ designates arbitrary antiderivatives, which will be specify later.

It is helpful to recall the reduction of order method: If one solution $z_{1,k}$ to \eqref{13} is known, a second linearly independent solution can be found in any interval where $z_{1,k}$ does not vanish as
\begin{equation}\label{13a}
z_{2,k}(r)=z_{1,k}(r)\int z_{1,k}(r)^{-2}r^{1-N}\,\mathrm{d}r.
\end{equation} 

One can find the asymptotic behavior of any solution $z$ of \eqref{13} as $r\to 0$ and as $r\to+\infty$ by examining the indicial roots of the associated Euler equations. We recall \eqref{7a} to get, as $r\to+\infty$, the limiting equation of \eqref{13} 
\begin{equation}\label{14}
r^2\phi_k''+(N-1)r\phi_k'+(2(N-2)-\lambda_k)\phi_k=0, \hspace{0.3cm} k\geq 0.
\end{equation}
As $r\to 0$ the limiting equation is given by 
\[
r^2\phi_k''+(N-1)r\phi_k'-\lambda_k\phi_k=0.
\]
In this way, the behavior will be ruled by $z(r)\sim r^{-\mu}$, where $\mu$ satisfies
\begin{equation}\label{14a}
\mu^2-(N-2)\mu-\lambda_k=0.
\end{equation}

Equation \eqref{12a} can be solved for each $k$ separately:

{\it Case $k=0$}. Since $\lambda_0=0$, Eq.~\eqref{12a} is the radial form of the linear problem \eqref{12}. As $r\to+\infty$ the limiting equation is
\begin{equation}\label{15}
r^2\phi_0''+(N-1)r\phi_0'+2(N-2)\phi_0=0.
\end{equation} 
The indicial roots of the associated Euler equations are
\begin{equation}\label{15a}
\mu_0^\pm=\dfrac{N-2}{2}\pm\dfrac{1}{2}\sqrt{(N-2)(N-10)}.
\end{equation}
As $r\to 0^+$, the indicial roots are 
\begin{equation}\label{15b}
\mu_1=0\quad \text{and}\quad \mu_2=N-2.
\end{equation}

Since Eq.~\eqref{6} is invariant under the transformation $\alpha \mapsto U(\alpha r)+2\log\alpha$, we see by differentiation in $\alpha$ (recall \eqref{4a}) that the function \[z_{1,0}=v_2(\log r)+2\] satisfies \eqref{13}. By Claim~\ref{7d} in Section~2, the asymptotic behavior of $z_{1,0}$, as $r\to+\infty$, depends on the dimension in the following way:  
\begin{enumerate}[i)]
\item if $4\leq N\leq 9$, then $z_{1,0}=O(r^{-\frac{N-2}{2}})$ as $r\to+\infty$ and $z_{1,0}(r)=O(1)$ as $r\to 0^+$;
\item if $N=10$, there exists $c>0$ such that $z_{1,0}=cr^{-4}\log r(1+o(1))$ as $r\to+\infty$ and $z_{1,0}(r)=O(1)$ as $r\to 0^+$; 
\item if $N>10$, there exists $c>0$ such that $z_{1,0}=cr^{-\mu_0^-}(1+o(1))$ as $r\to+\infty$ and $z_{1,0}(r)=O(1)$ as $r\to 0^+$.
\end{enumerate}
Let's construct a second solution to \eqref{13} for each dimension separately. If $4\leq N\leq 9$, define $z_{2,0}$ for small $r>0$ by
\begin{equation}\label{16}
z_{2,0}(r)=z_{1,0}(r)\int_{r_0}^r z_{1,0}^{-2}s^{1-N}\,\mathrm{d}s,
\end{equation}
where $r_0$ is small so that $z_{1,0}>0$ in $(0,r_0)$ (which is possible because $z_{1,0}\sim 1$ near to $0$). Then $z_{2,0}$ is extended to $(0,+\infty)$ so that it is a solution to the homogeneous equation \eqref{13} in this interval. By \eqref{15a} and \eqref{15b}, $z_{2,0}=O(r^{-\frac{N-2}{2}})$ as $r\to+\infty$ and $z_{2,0}\sim r^{2-N}$ as $r\to 0^+$.
We define
\[
\phi_0(r)=z_{1,0}(r)\int_1^r z_{2,0}h_0s^{N-1}\,\mathrm{d}s-z_{2,0}(r)\int_0^r z_{1,0}h_0s^{N-1}\,\mathrm{d}s.
\]
$\phi_0$ depends linearly on $h_0$ and is a solution of \eqref{12a}. We omit a calculation to verify that \[\|\phi_0\|_*\leq C_0\|h_0\|_{**}.\]

If $N\geq 10$, the strategy is the same as previously, but this time is more convenient to rewrite the variation of parameters formula in the form
\[
\phi_0(r)=-z_{1,0}(r)\int_{0}^rz_{1,0}(s)^{-2}s^{1-N}\int_0^sz_{1,0}(\tau)h_0(\tau)\tau^{N-1}\,\mathrm{d}\tau\mathrm{d}s,\ r>0,
\]
This formula is well defined because $z_{1,0}>0$ (see Claim~\ref{7b} in Section~2). Again, a straightforward calculation shows that $\phi_0$ satisfies 
\[
\|\phi_0\|_*\leq C_0\|h_0\|_{**}.
\]

\textit{Case} $k=1,\dotsc,N$. In this case as $r\to+\infty$ eq.~\eqref{13} becomes
\begin{equation}\label{17}
r^2\phi_k''+(N-1)r\phi_k'+(N-3)\phi_k=0.
\end{equation}
The indicial roots of the associated Euler equations are
\begin{equation}\label{17a}
\mu_k^+=N-3\quad \text{and}\quad \mu_k^-=1.
\end{equation}
As $r\to 0^+$, the indicial roots are 
\begin{equation}
\mu_1=-1\quad \text{and}\quad \mu_2=N-1.
\end{equation}
Similarly to the case $k=0$ we have a solution to \eqref{13}, namely $z_{1,k}(r)=-U'_\alpha(r)$ which is positive in all $(0,+\infty)$. Using \eqref{7a}  we find that
\[z_{1,k}=-r^{-1}v_2(\log r).\]
About the behavior of $z_{1,k}$, by \eqref{7a} we deduce that there exist constants $c_\infty, c_0>0$ such that $z_{1,k}=c_\infty r^{-1}(1+o(1))$ as $r\to+\infty$ and $z_{1,k}(r)=c_0r(1+o(r))$ as $r\to 0^+$.
With it we can build a solution to \eqref{12a}
\begin{equation}\label{17b}
\phi_k(r)=-z_{1,k}(r)\int_0^rz_{1,k}(s)^{-2}s^{1-N}\int_0^sz_{1,k}(\tau)h_k(\tau)\tau^{N-1}\,\mathrm{d}\tau\mathrm{d}s.
\end{equation} 
We omit a calculation to show that $\phi_k$ satisfies 
\[
\|\phi_k\|_*\leq C_k\|h_k\|_{**}.
\]

\textit{Case} $k>N$. Define
\begin{equation}\label{17c}
L_k\phi=\phi''+\dfrac{N-1}{r}\phi'+\left(2(N-2)\e^{U_\alpha}-\dfrac{\lambda_k}{r^2}\right)\phi=0.
\end{equation}
This operator satisfies the maximum principle in any interval of the form $(\delta,1/\delta)$, $\delta>0$. Indeed, the positive function $z=-U_\alpha'$ is a supersolution, because 
\[L_kz=\dfrac{N-1-\lambda_k}{r^2}z<0\quad\text{in}\ (0,+\infty),\]
since $\{\lambda_k\}_k$ is an increasing sequence. To prove the solvability of \eqref{12a} in the appropriate space we observe that 
\[
\rho(r)=\pm\dfrac{C_k\|h_k\|_{**}}{r^{\sigma-2}+r^\beta},
\] 
(for some suitable large $C_k$) provides sub and supersolutions to $L_k\phi=h_k$. Then the method of sub and supersolutions shows that $\phi_k$, founded in this way, satisfies \[\|\phi_k\|_*\leq C_k\|h_k\|_{**}.\] 

\begin{remark}[Case $N=3$]\label{17d}~
\begin{enumerate}[i)]
\item Fourier mode $k=0$: is handled exactly as in dimensions $4\leq N\leq 9$. 
\item Fourier modes $k=1,2,3$: due to \eqref{17a} some functions in a subspace of solutions to the homogeneous equation \eqref{13} don't have decay at infinity, as we require. So, in order to solve the non-homogeneous equation \eqref{12a}, we have to impose an orthogonality condition on $h_k$, $k=1,2,3$. If we look at \eqref{17b}, we find out that such an orthogonality condition is
\begin{equation}\label{17e}
\int_0^\infty z_{1,k}(\tau)h_k(\tau)\tau^2\,\mathrm{d}\tau=0,\quad k=1,2,3.
\end{equation}
If so, it follows easily from \eqref{17b} that $\phi_k$ satisfies 
\[
\|\phi_k\|_*\leq C_k\|h_k\|_{**}.
\]
\item Fourier modes $k>3$: the method previously used for higher dimensions works also. 
\end{enumerate}
\end{remark}


\subsection{Continuity}
The previous construction implies that given an integer $m>0$, if $\|h\|_{**}<+\infty$ and $h_k=0$, for all $k\geq m$ then there exists a solution $\phi$ to \eqref{12} that depends linearly with respect to $h$ and 
\[
\|\phi\|_*\leq C_m\|h\|_{**},
\]
 where $C_m$ may depend only in $m$. We shall show that $C_m$ can be chosen independently of $m$ using a blow-up argument that has been previously used by \cite{CHS1984,DPM2007,DPMW2007,DPMW2008,MP1996}.

Suppose, by contradiction, that there is a sequence of functions $h_j$ such that $\|h_j\|_{**}<+\infty$, each $h_j$ has only finitely many non-trivial Fourier modes and that the solution $\phi_j\neq 0$ satisfies 
\[
\|\phi_j\|_*\geq C_j\|h_j\|_{**},
\]
where $C_j\to+\infty$ as $j\to\infty$ (no confusion should arise between $\phi_j$, $h_j$ and the associated Fourier modes). Replacing $\phi_j$ by $\phi_j/\|\phi_j\|_*$ we may assume that $\|\phi_j\|_*=1$ and $\|h_j\|_{**}\to 0$ as $j\to\infty$. We may also assume that the Fourier modes associated to $\lambda_0=0$ and $\lambda_1=...=\lambda_N=N-1$ are zero.

Along a subsequence (which we write the same) we must have 
\begin{equation}\label{18}
\sup_{x>1}|x|^\beta|\phi_j(x)|\geq \dfrac{1}{2}
\end{equation}
or
\begin{equation}\label{19}
\|\phi_j(x)\|_{L^\infty(B_1(0))}\geq \dfrac{1}{2}.
\end{equation}
Assume first that \eqref{18} occurs and let $x_j\in\R^N$ with $|x_j|>1$ be such that 
\[
|x_j|^\beta|\phi_j(x_j)|>\dfrac{1}{4}.
\]
Along a new sequence (denote by the same) $x_j\to x_0$ or $x_j\to+\infty$.

If $x_j\to x_0$ then $x_0\geq 1$ and by standard elliptic estimates $\phi_j\to\phi$ uniformly on compacts sets of $\R^N$. Thus $\phi$ is a solution to \eqref{12} with right hand side equal to zero that also satisfies $\|\phi\|_*<+\infty$ and is such that the Fourier modes $\phi_0,...,\phi_N$ are zero. But the unique solution to this problem is $\phi=0$, contradicting $\phi(x_0)\neq 0$.

If $|x_j|\to+\infty$, consider $\widetilde{\phi}_j(y)=|x_j|^\beta\phi_j(|x_j|y)$. Then $\widetilde{\phi}_j$ satisfies 
\[
\Delta\widetilde{\phi}_j+\lambda_0\e^{U_\alpha(|x_j|y)}|x_j|^2\widetilde{\phi}_j=\widetilde{h}_j\quad\text{in}\ \R^N,
\]
 where $\widetilde{h}_j=|x_j|^{\beta+2}h_j(|x_j|y)$. But since $\|\phi_j\|_*=1$ we have
 \begin{equation}\label{20}
 |\widetilde{\phi}_j(y)|\leq|y|^{-\beta},\quad |y|>\dfrac{1}{|x_j|}.
 \end{equation}
 So $\widetilde{\phi}_j$ is uniformly bounded on compact sets of $\R^N\setminus\{0\}$. Similarly, for $|y|>1/|x_j|$
 \[
 |\widetilde{h}_j(y)|\leq\|h_j\|_*|y|^{-\beta-2}
 \]
 and hence $\widetilde{h}_j\to 0$ uniformly on compact sets of $\R^N\setminus\{0\}$ as $j\to\infty$. By elliptic estimates $\widetilde{\phi}_j\to\phi$ uniformly on compact sets of $\R^N\setminus\{0\}$ and $\phi$ solves
 \[
 \Delta\phi+\lambda_0|y|^{-2}\phi=0\quad\text{in}\ \R^N\setminus\{0\}.
 \]
From \eqref{20} we deduce the bound
\begin{equation}\label{21}
 |\phi(y)|\leq|y|^{-\beta},\quad |y|>0.
\end{equation}

Expanding $\phi$ as 
\[
\phi(x)=\sum_{k=N+1}^\infty \phi_k(r)\Theta_k(\theta),
\]
where $\phi_k$ denotes the Fourier modes of $\phi$ (recall that we assumed at the beginning that the first $N+1$ of these modes were zero), we see that $\phi_k$ has to be a solution to
 \[
 \phi_k''+\frac{N-1}{r}\phi_k'+\dfrac{2(N-2)-\lambda_k}{r^2}\phi_k=0,\quad \forall r>0,\ \forall k>N+1.
 \]
The solutions of this equation are linear combinations of $r^{-\mu_k^\pm}$, where
\[
\mu_k^\pm=\dfrac{N-2}{2}\pm\dfrac{1}{2}\sqrt{(N-2)(N-10)-4\lambda_k},\quad k>N+1.
\]
It's easy to check that $\mu_k^-<0$ and $\beta<\mu_k^+$. Thus, $\phi_k$ cannot have a bound of the form \eqref{21} unless it is identically zero. This is a contradiction because $\widetilde{\phi}_j(x_j/|x_j|)\geq 1/4$ for all $j$.
 
The analysis of the case \eqref{19} is similar. By density, for any $h$ with $\|h\|_{**}<+\infty$ a solution $\phi$ of \eqref{12} can be constructed and it satisfies $\|\phi\|_*\leq C\|h\|_{**}.$ This proves Proposition~\ref{12b} in the case $\xi=0$.
 
 \subsection{Proof of Proposition~\ref{12b}}
Let $\eta$ be a smooth cut-off function such that 
\begin{gather*}
 \eta(x)=0 \quad \text{for all } |x-\xi|\leq\delta,\\
 \eta(x)=1 \quad \text{for all } |x-\xi|\geq 2\delta,
\end{gather*}
where $\delta>0$ is small. We shall solve 
\begin{gather*}
 \Delta\phi_2+\lambda_0\e^{U_\alpha}(1-\eta)\phi_2=(1-\eta)h \quad \text{in } \R^N,\\
 \lim_{|x|\to+\infty}\phi_2(x)=0.
\end{gather*}
Note that for $\delta>0$ sufficiently small but fixed the operator $\Delta+\lambda_0\e^{U_\alpha}(1-\eta)$ is coercive, hence there exists a solution to this problem and we have the estimates
 \begin{gather}
 |\phi_2(x)|\leq C\|h\|_{**,\xi}\quad \text{for all}\ |x-\xi|\leq 1,\label{22}\\
 |\phi_2(x)|\leq C\|h\|_{**,\xi}(1+|x|)^{2-N}\quad \text{for all}\ |x-\xi|\geq 1.\label{23}
 \end{gather}
 
 According to the above arguments, we can solve the equation 
 \begin{equation}\label{24}
 \Delta\phi_1+\lambda_0\e^{U_\alpha}\phi_1=-\lambda_0\e^{U_\alpha}\eta\phi_2+\eta h\quad\text{in}\ \R^N,
 \end{equation}
 provided the right hand side has finite $\|\ \|_{**}$ norm. But, since $\eta\phi_2=0$ for $|x-\xi|\leq\delta$, \eqref{22} and \eqref{23} imply that 
 \[
 \|\lambda_0e^{U_\alpha}\eta\phi_2\|_{**}\leq C\|h\|_{**,\xi}.
 \]
 Thus, there exists a solution $\phi_1$ to \eqref{24}, such that  
 \begin{equation}\label{25}
 \|\phi_1\|_*\leq C\|h\|_{**,\xi}.
 \end{equation}
 Note that the norms $\|\|_{*}$ and $\|\|_{*,\xi}$ are equivalent, as directly can be checked from their definitions. Then there exists $C>0$ (which might depends on $Z$) such that
 \begin{equation}\label{26}
 \|\phi_1\|_{*,\xi}\leq C\|h\|_{**,\xi}.
 \end{equation}
 Define $\phi=\phi_1+\phi_2$, which is a solution to \eqref{12}. Then from \eqref{22}, \eqref{23} and \eqref{26} we see that \eqref{12c} holds, and the proof is complete.\qed


\section{Proof of Proposition~\ref{11c}}
We shall use the operator constructed in the previous section in order to prove Proposition~\ref{11c}. We fix $Z>0$ large and work with $|\xi|\leq Z$. The estimates depend on $\xi$ only through $Z$. We assume that $0\in\D$. Let $0<R_0<R_1$ be fixed such that $2R_0<R_1$ and $\D\subset B_{R_0}$. Let $\rho\in C^\infty(\R^N),\ 0\leq\rho\leq 1$ be such that 
\[\rho(x)=0\quad \text{for}\ |x|\leq 1,\quad\rho(x)=1\quad \text{for}\ |x|\geq 2\]
and set 
\[\eta_\lambda(x)=\rho\left(\dfrac{\lambda_0^{1/2}}{R_0\lambda^{1/2}}(x-\xi)\right),\quad \zeta_\lambda(x)=\rho\left(\dfrac{\lambda_0^{1/2}}{R_1\lambda^{1/2}}(x-\xi)\right).\]

We look for a solution to \eqref{11} of the form 
\[
\phi=\eta_\lambda\varphi+\psi.
\]
We need then to solve the system of equations
\begin{equation}\label{27}
 \left\{
  \begin{aligned}
   \Delta \psi+(1&-\zeta_\lambda)\lambda_0\e^{U_\alpha}\psi\\
   &=-2\nabla\eta_\lambda\nabla\varphi-\varphi\Delta\eta_\lambda+(1-\zeta_\lambda)h &&\text{in } \R^N\setminus\overline{\D}_{\lambda,\xi},\\
   \psi&=0 \quad \text{on } \partial\D_{\lambda,\xi},\quad \lim_{|x|\to+\infty}\psi(x)=0;
  \end{aligned}
 \right.
\end{equation}
\begin{equation}\label{28}
\Delta\varphi+\lambda_0\e^{U_\alpha}\varphi=-\lambda_0\e^{U_\alpha}\zeta_\lambda\psi+\zeta_\lambda h,\quad \text{in } \R^N;
\end{equation}
where $\varphi$, $\psi$ are the unknowns.

Proposition~\ref{11c} will be proved using a fixed point argument. We assume $\|h\|_{**,\xi}<+\infty$. Let \[E_\lambda=B_{2\sqrt{\frac{\lambda}{\lambda_0}} R_0}(\xi)\setminus B_{\sqrt{\frac{\lambda}{\lambda_0}} R_0}(\xi)\] and consider the Banach space \[X=\{\varphi/\ \varphi:\ \R^N \longrightarrow\R\ \text{is Lipschitz continuous in}\ E_\lambda\ \text{with}\ \|\varphi\|_{*,\xi}<+\infty\}\]
with the norm 
\[
\|\varphi\|_X=\|\varphi\|_{*,\xi}+\lambda^{1/2}\|\nabla\varphi\|_{L^\infty(E_\lambda)}.
\]
Given $\varphi\in X$ we first note that \eqref{27} has a solution for suitable small $\lambda$ because $\|(1-\zeta_\lambda)\lambda_0\e^{U_\alpha}\|_{L^{N/2}(\R^N\setminus\overline{\D}_{\lambda,\xi})}\to 0$ as $\lambda\to 0^+$. Let $\psi(\varphi)$ denote this solution, which is clearly linear in $\varphi$. As we shall see, $|\psi|\leq C/|x|^{N-2}$ for large $|x|$, which implies that the right hand side of \eqref{28} has a finite $\|\ \|_{**,\xi}$. Then, by Proposition~\ref{12b}, Eq.~\eqref{28} has a solution $\overline{\varphi}$ such that $\|\overline{\varphi}\|_{*,\xi}<+\infty$. Set $F(\varphi)=\overline{\varphi}$.

For $\varphi\in X$ we will fist prove the estimate
\begin{equation}\label{28b}
|\psi(x)|\leq C\lambda^{(N-2)/2}(\|h\|_{**,\xi}+\|\varphi\|_X)|x-\xi|^{2-N},
\end{equation}
for all $x\in\R^N\setminus\overline{\D}_{\lambda,\xi}$. Indeed, let $\widetilde{\psi}(z)=\psi\left(\xi+\sqrt{\dfrac{\lambda}{\lambda_0}}z\right),\ z\in\R^N\setminus\D.$ Then
\begin{equation}\label{28a}
\left\{\begin{gathered} 
\Delta \widetilde{\psi}+\lambda(1-\rho(z/{R_1}))\e^{U_\alpha}\widetilde{\psi}=g\quad\text{in}\ \R^N\setminus\overline{\D},\\
\widetilde{\psi}=0\quad\text{on}\ \partial\D,\quad \lim_{|x|\to+\infty}\widetilde{\psi}(x)=0,
\end{gathered}\right.
\end{equation}
where 
\begin{multline*}
g=-2\dfrac{\lambda^{1/2}}{R_0\lambda_0^{1/2}}\nabla\rho\left(\dfrac{z}{R_0}\right)\nabla\varphi\left(\xi+\sqrt{\dfrac{\lambda}{\lambda_0}}z\right)-\dfrac{1}{R_0^2}\Delta\rho\left(\dfrac{z}{R_0}\right)\varphi\left(\xi+\sqrt{\dfrac{\lambda}{\lambda_0}}z\right)\\
+\dfrac{\lambda}{\lambda_0}\left(1-\rho\left(\dfrac{z}{R_1}\right)\right)h\left(\xi+\sqrt{\dfrac{\lambda}{\lambda_0}}z\right).
\end{multline*}
Then the support of $g$ is contained in the ball $B_{2R_1}$ and we can estimate for all $z\in\R^N\setminus\D$, $|z|\leq 2R_1,$
\begin{gather}
2\dfrac{\lambda^{1/2}}{R_0\lambda_0^{1/2}}\left|\nabla\rho\left(\dfrac{z}{R_0}\right)\nabla\varphi\left(\xi+\sqrt{\dfrac{\lambda}{\lambda_0}}z\right)\right|\leq C\|\varphi\|_X \label{29}\\
\dfrac{1}{R_0^2}\left|\Delta\rho\left(\dfrac{z}{R_0}\right)\varphi\left(\xi+\sqrt{\dfrac{\lambda}{\lambda_0}}z\right)\right|\leq C\|\varphi\|_X \label{30}\\
\dfrac{\lambda}{\lambda_0}\left|\left(1-\rho\left(\dfrac{z}{R_1}\right)\right)h\left(\xi+\sqrt{\dfrac{\lambda}{\lambda_0}}z\right)\right|\leq C\lambda^{1-\sigma/2}\|h\|_{**,\xi}. \label{31}
\end{gather}
Since $0\in\D$ and $\sigma<2$, we see from \eqref{29}--\eqref{31} that 
\[
|g(z)|\leq C(\|\varphi\|_X+\|h\|_{**,\xi})\chi_{2R_1}.
\]
This estimate and \eqref{28a} yield 
\[
|\widetilde{\psi}(z)|\leq C(\|\varphi\|_X+\|h\|_{**,\xi})|z|^{2-N}\quad\text{for all}\ z\in\R^N\setminus\D
\]
which implies \eqref{28b}.

Recall that $\varphi\in X$, $\psi=\psi(\varphi)$ is the solution to \eqref{27} and we use the notation $\overline{\varphi}=F(\varphi)$. 

By Proposition~\ref{12b} we have
\begin{equation}\label{31a}
\|\overline{\varphi}\|_{*,\xi}\leq C(\|\lambda_0\e^{U_\alpha}\zeta_\lambda\psi\|_{**,\xi}+\|\zeta_\lambda h\|_{**,\xi}).
\end{equation}
Using \eqref{28b} we can estimate $\|\lambda_0\e^{U_\alpha}\zeta_\lambda\psi\|_{**,\xi}$. We have
\begin{equation}\label{32}
 \begin{aligned}
  \sup&_{|x-\xi|\leq 1}|x-\xi|^\sigma\e^{U_\alpha}\zeta_\lambda|\psi|\\
   & \leq C\lambda^{(N-2)/2}(\|h\|_{**,\xi}+\|\varphi\|_X)\sup_{\sqrt{\lambda/\lambda_0}R_1\leq|x-\xi|\leq 1}|x-\xi|^{2-N+\sigma}\\
 &\leq C\lambda^{\sigma/2}(\|h\|_{**,\xi}+\|\varphi\|_X).
 \end{aligned}
\end{equation}
On the other hand
\begin{equation}\label{33}
 \begin{aligned}
  \sup_{|x-\xi|\geq 1}|x-&\xi|^{2+\beta}\e^{U_\alpha}\zeta_\lambda|\psi|\\
  &\leq C\lambda^{(N-2)/2}(\|h\|_{**,\xi}+\|\varphi\|_X)\sup_{|x-\xi|\geq 1}|x-\xi|^{2-N+\beta}\\
  &\leq C\lambda^{(N-2)/2}(\|h\|_{**,\xi}+\|\varphi\|_X).
 \end{aligned}
\end{equation}

We deduce from \eqref{32} and \eqref{33} that
\begin{equation}\label{34}
\|\lambda_0\e^{U_\alpha}\zeta_\lambda\psi\|_{**,\xi}\leq C\lambda^{\sigma/2}(\|h\|_{**,\xi}+\|\varphi\|_X). 
\end{equation}
Therefore, from \eqref{31a} and \eqref{34}, we find that
\[
\|\overline{\varphi}\|_{*,\xi}\leq C(\lambda^{\sigma/2}\|\varphi\|_X+\|h\|_{**,\xi}).
\]
Using a scaling argument and elliptic estimates we can prove
\[
\sup_{E_\lambda}|\nabla\overline{\varphi}|\leq C\lambda^{-1/2}\|\overline{\varphi}\|_{*,\xi}
\]
and hence 
\[
\|F(\varphi)\|_X\leq C(\lambda^{\sigma/2}\|\varphi\|_X+\|h\|_{**,\xi}).
\]
Since $F$ is affine, this estimate shows that $F$ has a unique fix point $\varphi$ in $X$ for $\lambda>0$ suitable small, and the fix point satisfies 
\[
\|\varphi\|_X\leq C\|h\|_{**,\xi}.
\]
\qed


\section{Proof of Theorem~\ref{4d}}
In this section we prove Theorem~\ref{4d} by using a fixed point argument to solve problem \eqref{10}. In particular, we prove the following result:
\begin{proposition}
Assume $N\geq 4$. Then given $\alpha>0$ and $Z>0$, there are positive numbers $\Lambda$, $C$ such that for any $|\xi|<Z$ and any $0<\lambda<\Lambda$, there exists $\phi_{\lambda,\xi}$ solution to problem \eqref{10} such that 
\begin{equation}\label{35}
\|\phi_{\lambda,\xi}\|_{*,\xi}\leq C\lambda^{\sigma/2}\quad \text{for all}\ 0<\lambda<\Lambda,\ |\xi|<Z.
\end{equation}
\end{proposition}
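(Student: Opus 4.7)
The plan is to rewrite \eqref{10} as a fixed-point equation using the bounded right inverse $\Psi_\lambda$ supplied by Proposition~\ref{11c}. Setting
\[
T(\phi):=\Psi_\lambda\bigl(M(\phi)+E_\lambda\bigr),
\]
any fixed point of $T$ is, by construction, a solution of \eqref{10}. I would apply Banach's contraction principle to $T$ on the closed ball
\[
\mathcal{B}:=\bigl\{\phi\ :\ \|\phi\|_{*,\xi}\leq K\lambda^{\sigma/2}\bigr\}
\]
of the weighted space defined by \eqref{11a}, with $K>0$ a constant (depending only on $\alpha$, $Z$, $N$, $\sigma$, $\beta$) to be chosen large; the target estimate \eqref{35} is then nothing but membership in $\mathcal{B}$.

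The core work is in two weighted estimates. First, using \eqref{9c}, the boundedness of $e^{U_\alpha}$, and its decay $e^{U_\alpha(x)}=O(|x|^{-2})$ at infinity, one shows $\|E_\lambda\|_{**,\xi}\leq C\lambda^{\sigma/2}$: the critical contribution comes from the inner annulus $\sqrt{\lambda/\lambda_0}R_0\leq|x-\xi|\leq 1$, where $|x-\xi|^\sigma|E_\lambda|\leq C\lambda^{(N-2)/2}|x-\xi|^{\sigma+2-N}$ is maximized at $|x-\xi|\sim\sqrt{\lambda}$ and equals $C\lambda^{\sigma/2}$ there, while the exterior region $|x-\xi|\geq 1$ contributes at most $C\lambda^{(N-2)/2}$ since $\beta+2-N<0$. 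Second, the elementary Taylor estimate $|e^t-1-t|\leq Ct^2$ for $|t|$ bounded, applied with $t=\phi-\varphi_\lambda$, yields the pointwise bounds
\[
|M(\phi)|\leq Ce^{U_\alpha}\bigl(\phi^2+\varphi_\lambda^2\bigr),\qquad |M(\phi_1)-M(\phi_2)|\leq Ce^{U_\alpha}\bigl(|\phi_1|+|\phi_2|+|\varphi_\lambda|\bigr)|\phi_1-\phi_2|,
\]
valid when $\phi,\phi_1,\phi_2\in\mathcal{B}$ and $\lambda$ is small, since then both $\|\phi\|_{*,\xi}$ and $\varphi_\lambda$ (cf.\ \eqref{9c}) are small. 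Repeating the inner/outer splitting used for $E_\lambda$---the contribution of $\varphi_\lambda^2$ matching precisely the rate $\lambda^{\sigma/2}$---gives
\[
\|M(\phi)\|_{**,\xi}\leq C\bigl(\|\phi\|_{*,\xi}^2+\lambda^{\sigma/2}\bigr),
\]
\[
\|M(\phi_1)-M(\phi_2)\|_{**,\xi}\leq C\bigl(\lambda^{\sigma/2}+\|\phi_1\|_{*,\xi}+\|\phi_2\|_{*,\xi}\bigr)\|\phi_1-\phi_2\|_{*,\xi}.
\]

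Combining these with the estimate $\|\Psi_\lambda h\|_{*,\xi}\leq C\|h\|_{**,\xi}$ from Proposition~\ref{11c}, one obtains for $\phi,\phi_1,\phi_2\in\mathcal{B}$,
\[
\|T(\phi)\|_{*,\xi}\leq C\bigl(K^2\lambda^\sigma+\lambda^{\sigma/2}\bigr),\qquad \|T(\phi_1)-T(\phi_2)\|_{*,\xi}\leq CK\lambda^{\sigma/2}\|\phi_1-\phi_2\|_{*,\xi}.
\]
Choosing $K>2C$ and then $\Lambda$ so small that $CK\lambda^{\sigma/2}<1/2$ and $CK^2\lambda^{\sigma/2}<1/2$ for $0<\lambda<\Lambda$, one makes $T:\mathcal{B}\to\mathcal{B}$ a contraction; its unique fixed point $\phi_{\lambda,\xi}$ is the sought solution of \eqref{10} satisfying \eqref{35}, and standard elliptic regularity upgrades it to a classical solution. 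The main technical obstacle is the inner region $|x-\xi|\sim\sqrt{\lambda}$, where $\varphi_\lambda$ and $e^{U_\alpha}$ are both $O(1)$ and the contribution of $\varphi_\lambda^2$ in $M(\phi)$ must be absorbed by the vanishing weight $|x-\xi|^\sigma$ to produce exactly $\lambda^{\sigma/2}$; this balancing is what dictates both the sharp radius of $\mathcal{B}$ and the exponent in \eqref{35}.
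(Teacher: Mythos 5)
Your argument is correct and follows essentially the same route as the paper: Proposition~\ref{11c} turns \eqref{10} into a fixed-point problem for $\phi\mapsto\Psi_\lambda(M(\phi)+E_\lambda)$, the estimates $\|E_\lambda\|_{**,\xi}\leq C\lambda^{\sigma/2}$ and $\|M(\phi)\|_{**,\xi}\leq C(\|\phi\|_{*,\xi}^2+\lambda^{\sigma/2})$ come from the same inner/outer splitting in the weighted norms, and Banach's principle closes the argument. The only (cosmetic) difference is that you contract directly on the $\lambda$-dependent ball $\|\phi\|_{*,\xi}\leq K\lambda^{\sigma/2}$ and read \eqref{35} off as membership in that ball, whereas the paper contracts on a ball of fixed small radius $\rho$ and extracts the $\lambda^{\sigma/2}$ decay a posteriori from the fixed-point identity; one small inaccuracy in your prose is the claim that $\varphi_\lambda$ is \emph{small} for $\lambda$ small---near $\partial\D_{\lambda,\xi}$ it is only $O(1)$---but the Taylor bound $|\e^t-1-t|\leq Ct^2$ only needs $|t|$ bounded, so this does not affect the argument.
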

\begin{proof}
There is not loss of generality in assuming $0\in \D$. Fix $\delta>0$ such that $B_\delta(0)\subset\D$.
We first estimate $\|E_\lambda\|_{**,\xi}$ and $\|M(\phi)\|_{**,\xi}$ in \eqref{10a}. In particular we have
\begin{gather}
\|E_\lambda\|_{**,\xi}\leq C\lambda^{\sigma/2} \label{36}\\ 
\|M(\phi)\|_{**,\xi}\leq C(\|\phi\|_{*,\xi}^2+\lambda^{\sigma/2})\e^{\|\phi\|_{*,\xi}}. \label{37}
\end{gather}

In fact, by \eqref{9c}
\begin{align*}
\sup_{|x-\xi|\leq 1,\ x\notin \D_{\lambda,\xi}}|x-\xi|^\sigma|\varphi_\lambda(x)|\lambda_0\e^{U_\alpha}&\leq C\lambda^{(N-2)/2}\sup_{\delta\sqrt{\lambda/\lambda_0}\leq|x-\xi|\leq 1}|x-\xi|^{\sigma+2-N}\\
&\leq C\lambda^{\sigma/2},
\end{align*}
and
\begin{align*}
\sup_{|x-\xi|\geq 1}|x-\xi|^{2+\beta}|\varphi_\lambda(x)|\lambda_0\e^{U_\alpha}&\leq C\lambda^{(N-2)/2}\sup_{|x-\xi|\geq 1}|x-\xi|^{\beta+2-N}\\
&\leq C\lambda^{(N-2)/2},
\end{align*}
which yields \eqref{36}.

For \eqref{37}, by the definition of $M$ and the identity $\e^\epsilon=1+\epsilon+\int_0^\epsilon\e^t(\epsilon-t)\,\textrm{d}t$, valid for all $\epsilon\in\R$, we have 
\[
|M(\phi)|\leq C\e^{U_\alpha}(\phi^2+\varphi_\lambda^2)\e^{|\phi|+|\varphi_\lambda|}.
\]
Additionally,
\begin{equation*}
\sup_{|x-\xi|\leq 1,\ x\notin \D_{\lambda,\xi}}|x-\xi|^\sigma\e^{U_\alpha}\phi^2\leq C\|\phi\|_{*,\xi}^2
\end{equation*} 
and
\begin{align*}
\sup_{|x-\xi|\leq 1,\ x\notin \D_{\lambda,\xi}}|x-\xi|^\sigma\e^{U_\alpha}\varphi_\lambda^2&\leq C\lambda^{N-2} \sup_{\delta\sqrt{\lambda/\lambda_0}\leq|x-\xi|\leq 1}|x-\xi|^{\sigma+4-2N}\\
&\leq C\lambda^{\sigma/2}.
\end{align*}
Note also that, by \eqref{9c},  
\[
|\varphi_\lambda(x)|\leq C\delta^{2-N}\quad\text{for all}\ x\notin\D_{\lambda,\xi},\ \lambda>0.
\]
These inequalities yield
\begin{equation}\label{38}
\sup_{|x-\xi|\leq 1,\ x\notin \D_{\lambda,\xi}}|x-\xi|^\sigma|M(\phi)|\leq C(\|\phi\|_{*,\xi}^2+\lambda^{\sigma/2})\e^{\|\phi\|_{*,\xi}}
\end{equation}
On the other hand
\begin{align*}
\sup_{|x-\xi|\geq 1}|x-\xi|^{2+\beta}\e^{U_\alpha}\phi^2&\leq C\|\phi\|_{*,\xi}^2\sup_{|x-\xi|\geq 1}|x-\xi|^{-\beta}\\
&\leq C\|\phi\|_{*,\xi}^2
\end{align*}
and
\begin{align*}
\sup_{|x-\xi|\geq 1}|x-\xi|^{2+\beta}\e^{U_\alpha}\varphi_\lambda^2&\leq C\lambda^{N-2}\sup_{|x-\xi|\geq 1}|x-\xi|^{\beta+4-2N}\\
&\leq C\lambda^{N-2}.
\end{align*}
Then 
\begin{equation}\label{39}
\sup_{|x-\xi|\geq 1}|x-\xi|^{2+\beta}|M(\phi)|\leq C(\|\phi\|_{*,\xi}^2+\lambda^{N-2})\e^{\|\phi\|_{*,\xi}}.
\end{equation}
Combining \eqref{38} with \eqref{39} we obtain \eqref{37}.

Now let us focus on the fixed point argument. We define for small $\rho>0$
\[
\mathcal{F}=\{\phi:\ \R^N\setminus\D_{\lambda,\xi} \longrightarrow\R\ /\ \|\phi\|_{*,\xi}\leq\rho\}
\]
and the operator $\overline{\phi}=\mathcal{A}(\phi)$ where $\overline{\phi}$ is the solution of Proposition~\ref{11c} to
\begin{equation}
\left\{\begin{aligned}
\Delta\overline{\phi}+\lambda_0\e^{U_\alpha}\overline{\phi}&=M(\phi)+E_\lambda&&\text{in}\ \R^N \setminus\overline{\D}_{\lambda,\xi},\\
\phi&=0&&\text{on}\ \partial\D_{\lambda,\xi},
\end{aligned}\right.
\end{equation}
where $M$ and $E_\lambda$ are given by \eqref{10a}. We prove that choosing $\rho>0$ small enough, $\mathcal{A}$ has a fixed point in $\mathcal{F}$. From Proposition~\ref{11c} we have the estimate
\[
\|\mathcal{A}(\phi)\|_{*,\xi}\leq C(\|M(\phi)\|_{**,\xi}+\|E_\lambda\|_{**,\xi})
\]
and, by \eqref{36} and \eqref{37},
\[
\|\mathcal{A}\|_{*,\xi}\leq C(\rho^2\e^\rho+\lambda^{\sigma/2}\e^\rho+\lambda^{\sigma/2})\leq\rho,
\]
if $\rho>0$ is fixed suitable small and then one consider $\lambda\to 0^+$. This proves $\mathcal{A}(\mathcal{F})\subset\mathcal{F}$.

Now let us take $\phi_1$ and $\phi_2$ in $\mathcal{F}$. Then
\begin{equation}\label{39a}
\|\mathcal{A}(\phi_1)-\mathcal{A}(\phi_2)\|_{*,\xi}\leq C\|M(\phi_1)-M(\phi_2)\|_{**,\xi}.
\end{equation}
To estimate the right hand side, consider $\overline{\phi}\in(\phi_1,\phi_2)\cup(\phi_2,\phi_1)$ such that  
\[
M(\phi_1)-M(\phi_2)=M'(\overline{\phi})(\phi_1-\phi_2).
\]

Directly from the definition of $M$, we compute 
\[
M'(\phi)=-\lambda_0\e^{U_\alpha}(\e^{\phi-\varphi_\lambda}-1).
\]
Indeed, note that \[|M'(\phi)|\leq C\e^{U_\alpha}(|\phi|+|\varphi_\lambda|)\e^{|\phi|+|\varphi_\lambda|}.\]
Therefore, 
\[
 |M(\phi_1)-M(\phi_2)|\leq C\e^{U_\alpha}(|\overline{\phi}|+|\varphi_\lambda|)\e^{|\overline{\phi}|}|\phi_1-\phi_2|.
\]
Similarly to \eqref{36} and \eqref{37},
\begin{equation*}
\sup_{|x-\xi|\leq 1,\ x\notin \D_{\lambda,\xi}}|x-\xi|^\sigma\e^{U_\alpha}|\overline{\phi}|\e^{|\overline{\phi}|}|\phi_1-\phi_2|\leq C\rho\e^\rho\|\phi_1-\phi_2\|_{*,\xi}
\end{equation*}
and
\begin{align*}
\sup_{|x-\xi|\leq 1,\ x\notin \D_{\lambda,\xi}}|x&-\xi|^\sigma\e^{U_\alpha}|\varphi_\lambda|\e^{|\overline{\phi}|}|\phi_1-\phi_2|\\ &\leq C\e^\rho\lambda^{(N-2)/2}\sup_{\delta\sqrt{\lambda/\lambda_0}\leq|x-\xi|\leq 1}|x-\xi|^{\sigma+2-N}\|\phi_1-\phi_2\|_{*,\xi}\\
&\leq C\e^{\rho}\lambda^{\sigma/2}\|\phi_1-\phi_2\|_{*,\xi}.
\end{align*}
These inequalities yield 
\begin{equation}\label{40}
\sup_{|x-\xi|\leq 1,\ x\notin \D_{\lambda,\xi}}|x-\xi|^\sigma|M(\phi_1)-M(\phi_2)|\leq C(\rho+\lambda^{\sigma/2})\e^\rho\|\phi_1-\phi_2\|_{*,\xi}.
\end{equation}

On the other hand
\begin{equation*}
\sup_{|x-\xi|\geq 1}|x-\xi|^{2+\beta}\e^{U_\alpha}|\overline{\phi}|\e^{|\overline{\phi}|}|\phi_1-\phi_2|\leq C\rho\e^\rho\|\phi_1-\phi_2\|_{*,\xi}
\end{equation*}
and
\begin{align*}
\sup_{|x-\xi|\geq 1}|x&-\xi|^{2+\beta}\e^{U_\alpha}|\varphi_\lambda|\e^{|\overline{\phi}|}|\phi_1-\phi_2|\\
&\leq C\lambda^{(N-2)/2}\e^\rho\sup_{|x-\xi|\geq 1}|x-\xi|^{2-N}\|\phi_1-\phi_2\|_{*,\xi}\\
&=C\lambda^{(N-2)/2}\e^\rho\|\phi_1-\phi_2\|_{*,\xi}.
\end{align*}
Then 
\begin{equation}\label{41}
\sup_{|x-\xi|\geq 1}|x-\xi|^{2+\beta}|M(\phi_1)-M(\phi_2)|\leq C(\rho+\lambda^{(N-2)/2})\e^\rho\|\phi_1-\phi_2\|_{*,\xi}.
\end{equation}
Combining \eqref{40} with \eqref{41} we obtain 
\begin{equation}\label{42}
\|M(\phi_1)-M(\phi_2)\|_{**,\xi}\leq C(\rho+\lambda^{\sigma/2})\e^\rho\|\phi_1-\phi_2\|_{*,\xi}.
\end{equation}

Gathering \eqref{39a} and \eqref{42} we conclude that $\mathcal{A}$ is a contraction mapping in $\mathcal{F}$ provided $\rho>0$ is fixed suitable small, and hence it has unique fixed point in this set. Moreover, from the previous steps we deduce the estimate
\[
\|\phi_{\lambda,\xi}\|_{*,\xi}\leq C\lambda^{\sigma/2}\quad \text{for all } 0<\lambda<\Lambda,
\]
which is the desired conclusion.
\end{proof}


\section{The case $N=3$}
In this section, we show the modifications needed in Theorem~\ref{4d} and its proof for the low dimension case, so we consider without mentioning $N=3$. 

We use again the norms defined in \eqref{11a}--\eqref{11b}, but this time $\beta\in(0,1/2)$. As we pointed out in Remark~\ref{17d}, the problem 
\[
 \Delta \phi-\lambda_0\e^{U_\alpha}\phi=h\quad\text{in}\ \R^3,\quad \|h\|_{**,\xi}<+\infty,
\] 
may not be solvable for $\|\phi\|_{*,\xi}<+\infty$, unless $h$ satisfies the orthogonality conditions
\begin{equation}\label{42b}
\int\limits_{\R^3}h\dfrac{\partial U_\alpha}{\partial x_i}\,\mathrm{d}x=0,\quad i=1,2,3,
\end{equation}
(note that these conditions are equivalent to those in \eqref{17e}).

Therefore, problem \eqref{10} may not be solvable in the required space unless $\xi$ would be chosen in a very special way. So, in low dimension we consider instead the projected problem
\begin{equation}\label{43}
\left\{\begin{aligned}
\Delta\phi+\lambda_0\e^{U_\alpha}\phi&=M(\phi)+E_\lambda+\sum_{i=1}^3c_i\Phi_i&&\text{in}\ \R^3 \setminus\overline{\D}_{\lambda,\xi},\\
\phi&=0&&\text{on } \partial\D_{\lambda,\xi},
\end{aligned}\right.
\end{equation}
where $c_i$'s are constants, which are part of the unknown, and 
\[
\Phi_i(x)=\eta(x)\dfrac{\partial U_\alpha}{\partial x_i}(x),\quad i=1,2,3.
\]
$\eta$ is a fixed radial cut-off function, i.e. $\eta\in C^\infty(\R^3)$, $\eta(x)=\eta(|x|)$, $0\leq\eta\leq 1$ and 
\[
\eta(x)=1\ \text{for}\ |x|\leq 1,\quad \eta(x)=0\ \text{for}\ |x|\geq 2.
\] 
The only purpose of $\eta$ is to make $\Phi_i$ ``sufficiently" integrable in $\R^3$.

We handle problem \eqref{43} using a similar scheme to problem \eqref{10}. Through an application of the Banach fixed point theorem in a suitable $L^\infty$ space, we prove that \eqref{43} is solvable in the form $\phi=\phi(\lambda,\xi)$, $c_i=c_i(\lambda,\xi)$, where the dependence on the parameters is continuous. We then obtain a solution of problem \eqref{10} if \[c_i(\lambda,\xi)=0\quad\text{for all}\ i=1,2,3.\] We will show that for each sufficiently small $\lambda$ there is indeed a point $\xi$ such that this system of equations is satisfied.

Similarly to higher dimensions, the use of contraction mapping principle is based on the construction of a bounded inverse for the linear problem 
\begin{equation}\label{44}
\left\{\begin{aligned}
\Delta\phi+\lambda_0\e^{U_\alpha}\phi&=h+\sum_{i=1}^3c_i\Phi_i&&\text{in}\ \R^3 \setminus\overline{\D}_{\lambda,\xi},\\
\phi&=0&&\text{on}\ \partial\D_{\lambda,\xi},
\end{aligned}\right.
\end{equation}
We have this analogous result to Proposition~\ref{11c}.
\begin{proposition}\label{44a}
Let us consider numbers $\alpha>0$ and $Z>0$. Then there exist positive constants $C$, $\Lambda$ such that for any $|\xi|\leq Z$ and any $0<\lambda<\Lambda$ the following holds: For any $h$ with $\|h\|_{**,\xi}<+\infty$, there exists a solution of problem \eqref{44} 
\[
(\phi,c_1,c_2,c_3)=\Psi_\lambda(h),
\] 
which defines a linear operator of $h$ such that 
\[
\|\phi\|_{*,\xi}+\max_{i=1,2,3}|c_i|\leq C\|h\|_{**,\xi}.
\]
\end{proposition}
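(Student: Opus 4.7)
The plan is to replicate the strategy of Section~5, replacing the entire-space inverse of Proposition~\ref{12b} by a three-dimensional analogue that absorbs the solvability obstruction identified in Remark~\ref{17d}(ii) through the Lagrange multipliers $c_i$. Since $\partial_i U_\alpha$ is a pure spherical-harmonic mode of index $i\in\{1,2,3\}$ by radial symmetry, these multipliers will decouple cleanly across Fourier modes, which is what allows the higher-dimensional framework to be transplanted essentially verbatim.

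Step one builds a bounded right inverse for the projected entire-space problem
\[
\Delta\phi+\lambda_0\e^{U_\alpha}\phi=h+\sum_{i=1}^{3}c_i\Phi_i\quad\text{in }\R^3,\qquad \lim_{|x|\to+\infty}\phi(x)=0.
\]
The $c_i$ are determined by demanding that $\tilde h:=h+\sum c_j\Phi_j$ satisfy the orthogonality \eqref{42b}, which reduces to the linear system
\[
\sum_{j=1}^{3}c_j\int_{\R^3}\Phi_j\,\frac{\partial U_\alpha}{\partial x_i}\,\mathrm{d}x = -\int_{\R^3}h\,\frac{\partial U_\alpha}{\partial x_i}\,\mathrm{d}x.
\]
By radial symmetry of $\eta$ and $U_\alpha$ the matrix on the left is diagonal with identical positive entries, so $c_i$ is uniquely determined. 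Using $|\partial_i U_\alpha(x)|\leq C(1+|x|)^{-1}$ (a consequence of \eqref{7a}), the right-hand side is controlled by $C\|h\|_{**,\xi}$: the near-$\xi$ integral converges because $\sigma<2$ and the far integral because $\beta>0$. Hence $\max|c_i|\leq C\|h\|_{**,\xi}$.

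With $c_i$ so chosen, $\tilde h$ satisfies \eqref{17e}, and Remark~\ref{17d} produces a mode-by-mode solution $\phi$: mode $k=0$ as in the case $4\leq N\leq 9$, modes $k=1,2,3$ through the variation-of-parameters formula \eqref{17b} (now well-defined thanks to \eqref{17e}), and modes $k>3$ via the sub/supersolution method used after \eqref{17c}. The blow-up argument of Section~4.2 then carries over, since in $N=3$ the first $N+1=4$ modes coincide with those killed by the projection, and the decay exponents $\mu_k^\pm$ for $k>3$ still force the limiting profile to vanish. This yields a linear map $h\mapsto(\phi,c_1,c_2,c_3)$ satisfying $\|\phi\|_{*,\xi}+\max|c_i|\leq C\|h\|_{**,\xi}$, with the $\xi$-dependence entering through $Z$ exactly as in Section~4.3.

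Step two promotes this to the exterior domain via the cutoff decomposition $\phi=\eta_\lambda\varphi+\psi$ of Section~5. Equation \eqref{27} for $\psi$ is unchanged, while \eqref{28} for $\varphi$ acquires an extra $\sum c_i\Phi_i$ on the right, solved by the projected entire-space inverse just constructed. Because $\Phi_i$ is compactly supported, estimates \eqref{28b}--\eqref{34} are insensitive to this addition, and the Banach fixed point argument for the affine operator $F(\varphi)=(\overline\varphi,c_1,c_2,c_3)$ closes for small $\lambda$ and produces the desired bound. The step requiring the most care is the blow-up analysis for the entire-space inverse in the presence of the multipliers: one must verify that $\sum c_{i,j}\Phi_i$ vanishes in the blow-up limit after any rescaling, which holds because $\Phi_i$ is compactly supported and the $c_{i,j}$ remain bounded by a small multiple of $\|h_j\|_{**}\to 0$; the invertibility of the matrix $M_{ij}$ then forces the limiting multipliers to vanish, reducing the contradiction to the mode $k>3$ analysis already present in higher dimensions.
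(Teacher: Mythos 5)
Your proposal follows the paper's own scheme: first construct the projected entire-space inverse (Proposition~\ref{45a}) by choosing the multipliers $c_i$ so that $h+\sum_j c_j\Phi_j$ satisfies the orthogonality conditions \eqref{42b} (equivalently \eqref{17e}), carry out the mode-by-mode construction, blow-up argument and gluing of Section~4, and then transplant to the exterior domain via the cutoff decomposition of Section~5 to obtain Proposition~\ref{44a}. Your explicit formula for $c_i$, obtained by testing against $\partial_i U_\alpha$ (the decaying cokernel element) rather than against $\Phi_i$, is in fact the correct form of what the paper records as \eqref{47}, and the remaining steps coincide with the argument the authors only sketch.
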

As we did in Section~4, we first consider the version of problem \eqref{44} in entire space,
\begin{equation}\label{45}
 \Delta\phi+\lambda_0\e^{U_\alpha}\phi=h+\sum_{i=1}^3c_i\Phi_i\quad \text{in}\ \R^3.
\end{equation}
The corresponding result to Proposition~\ref{12b} is the following.
\begin{proposition}\label{45a}
Let $\alpha>0$ and $Z>0$. Then there exists a $C>0$ such that for any $|\xi|\leq Z$ the following holds: For any $h$ with $\|h\|_{**,\xi}<+\infty$, there exists a solution of \eqref{45}
\[
(\phi,c_1,c_2,c_3)=\Psi(h),
\]
which defines a linear operator of $h$ such that
\begin{equation}\label{46}
\|\phi\|_{\ast,\xi}+\max_{i=1,2,3}|c_i|\leq C\|h\|_{**,\xi}.
\end{equation} 
\end{proposition}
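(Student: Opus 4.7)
My plan is to imitate the proof of Proposition~\ref{12b} from Section~4, with the unknowns $c_1,c_2,c_3$ playing the role of Lagrange multipliers that absorb the resonance in the Fourier modes $k=1,2,3$ identified in Remark~\ref{17d}. I would first establish the result for $\xi=0$ (the two weighted norms being translation-equivalent up to constants depending only on $Z$) and then recover the case $|\xi|\le Z$ by the cut-off and gluing construction of Section~4.3, which carries over without substantive change since the only new right-hand side $\sum_i c_i\Phi_i$ is smooth and compactly supported.

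For $\xi=0$, decompose $h=\sum_{k\ge 0}h_k(r)\Theta_k(\theta)$. Because the functions $\Phi_i=\eta\,\partial_{x_i}U_\alpha$ project only onto the Fourier modes $k=1,2,3$, the parameters $c_i$ affect exactly those modes. Modes $k=0$ and $k>3$ pose no resonance: the first is solved by the variation-of-parameters formula used in the case $4\le N\le 9$ of Section~4.1, and the second by the sub/supersolution method used for $k>N$ there. For $k=1,2,3$, Remark~\ref{17d} requires the orthogonality \eqref{17e}; multiplying the equation \eqref{45} by $\partial_{x_i}U_\alpha$ and integrating shows that this reduces to a linear $3\times 3$ system for the $c_i$ with matrix
\[
A_{ij}=\int_{\R^3}\eta\,\partial_{x_j}U_\alpha\,\partial_{x_i}U_\alpha\,\mathrm{d}x,
\]
which is diagonal with strictly positive entries by the radial symmetry of $\eta$ and $U_\alpha$. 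Hence the $c_i$ are uniquely determined and satisfy $|c_i|\le C\|h\|_{**}$, the latter bound following from $|\partial_{x_i}U_\alpha|\le C(1+|x|)^{-1}$ paired with the weights $|x|^{-\sigma}$ on $\{|x|\le 1\}$ (using $\sigma<2$) and $|x|^{-2-\beta}$ on $\{|x|\ge 1\}$. With the $c_i$ so chosen, formula \eqref{17b} produces $\phi_k$ with $\|\phi_k\|_*\le C\|h\|_{**}$.

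The mode-by-mode estimate has a constant that a priori depends on the number of modes; I would remove this dependence via the blow-up argument of Section~4.2. Since the first four Fourier modes of any $\phi_j$ in a contradiction sequence are constructed explicitly from $h_j$ with $|c_{i,j}|\le C\|h_j\|_{**}\to 0$, one may assume these modes vanish. After rescaling around an interior point or at infinity, the bounded limit $\phi$ satisfies $\Delta\phi+\lambda_0|y|^{-2}\phi=0$ in $\R^3\setminus\{0\}$ with $|\phi(y)|\le|y|^{-\beta}$ and with the first four spherical modes zero. For $k\ge 4$ one has $\lambda_k\ge 6$, so the indicial roots at infinity satisfy $\mu_k^+>1/2>\beta$; combined with the admissible behavior near the origin, no nontrivial spherical mode is compatible with both bounds, giving a contradiction.

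The principal difficulty is the book-keeping: verifying that the linear projection defining $c_i$ is continuous with a constant uniform in $|\xi|\le Z$, and that the blow-up truly decouples into the handled low modes and the genuinely high modes where the indicial analysis yields the contradiction. Once this is in place, the gluing step patterned on Section~4.3 reduces the general $\xi$ case to $\xi=0$, and Proposition~\ref{45a} follows.
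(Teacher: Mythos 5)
Your proposal is correct and follows essentially the same route as the paper: the paper likewise reduces to $\xi=0$, decomposes in Fourier modes, treats modes $k=0$ and $k>3$ as in Section~4, determines $c_i$ by the orthogonality requirement (the paper's explicit formula \eqref{47} is exactly what your diagonal $3\times3$ system yields, given the radial symmetry of $\eta$ and $U_\alpha$), and then invokes the blow-up argument of Section~4.2 and the gluing of Section~4.3 to obtain the uniform constant and the general $\xi$ case.
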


We observe that the numbers $c_i$ are explicit functions of $h$. Indeed, if $\phi$ solves \eqref{45} with the bound \eqref{46} then two integrations by parts again $\Phi_i$ yield 
\begin{equation}\label{47}
c_i=-\frac{\int_{\R^3}\! h\Phi_i\,\mathrm{d}x}{\int_{\R^3}\!\eta\left|\tfrac{\partial U_\alpha}{\partial x_i}\right|^2\mathrm{d}x},\quad i=1,2,3.
\end{equation}
This expression allows us to estimate $|c_i|$ in terms of $\|h\|_{**,\xi}$.

The scheme of the proof of Proposition~\ref{45a} is analogous to the one used in Proposition~\ref{12b}. We first consider $\xi=0$ and write $h$ in its Fourier modes. Then we treat each Fourier mode of Eq.~\eqref{45} separately. For Fourier modes $k=1,2,3$, we have to take care of choosing $c_i$ according to \eqref{47}; in this way, orthogonality conditions \eqref{42b}, and then \eqref{17e}, will be satisfied. The estimates for $|c_i|$, $i=1,2,3,$ in \eqref{46}  are obtained using \eqref{47}. The blow-up method used to prove the continuity of the operator, as well as the gluing argument are similar to Section~4, we omit the details.

Likewise, we can prove Proposition~\ref{44a} from Proposition~\ref{45a} using a similar scheme to Section~5.

\subsection{Proof of Theorem~\ref{42a}}    
Using a similar scheme to Section~6, from Proposition~\ref{44a} we can prove the existence of solutions to problem \eqref{43} in low dimension, we omit the details. In particular, we have
\begin{proposition}
Let us consider $\alpha>0$ and $Z>0$. Then there are positive numbers $\Lambda$, $C$ such that for any $|\xi|<Z$ and any $0<\lambda<\Lambda$ there exist $\phi_{\lambda,\xi},\ c_1(\lambda,\xi),\ c_2(\lambda,\xi),\ c_3(\lambda,\xi)$ solution to problem \eqref{43} such that 
\begin{equation}\label{47a}
\|\phi_{\lambda,\xi}\|_{*,\xi}+\max_{i=1,2,3}|c_i(\lambda,\xi)|\leq C\lambda^\gamma\quad \text{for all}\ 0<\lambda<\Lambda,\ |\xi|<Z,
\end{equation}
where 
\[
\gamma=1/2\min\{\sigma,1\}.
\]
\end{proposition}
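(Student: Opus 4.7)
The plan is to mimic the scheme of Section~6, using Proposition~\ref{44a} as the linear inverse in place of Proposition~\ref{11c}. Concretely, define $\mathcal{A}(\phi)=\overline{\phi}$, where $(\overline{\phi},c_1,c_2,c_3)=\Psi_\lambda(M(\phi)+E_\lambda)$ solves the projected linear problem \eqref{44} with right hand side $M(\phi)+E_\lambda$. We seek a fixed point of $\mathcal{A}$ in the ball $\mathcal{F}=\{\phi:\|\phi\|_{*,\xi}\leq\rho\}$ with $\rho$ proportional to $\lambda^\gamma$; the corresponding constants $c_i(\lambda,\xi)$ are then controlled directly by the same Proposition~\ref{44a} applied to the fixed point.

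The only quantitative difference with the $N\geq 4$ case lies in the $\lambda$-powers in the estimates of $\|E_\lambda\|_{**,\xi}$ and $\|M(\phi)\|_{**,\xi}$. In dimension three \eqref{9c} gives only $|\varphi_\lambda(x)|\leq C\lambda^{1/2}|x-\xi|^{-1}$, and $\e^{U_\alpha}$ is globally bounded, so for $|x-\xi|\leq 1$ outside $\D_{\lambda,\xi}$,
\[
|x-\xi|^\sigma\lambda_0\e^{U_\alpha}|\varphi_\lambda|\leq C\lambda^{1/2}|x-\xi|^{\sigma-1}.
\]
Since $|x-\xi|\geq\delta\sqrt{\lambda/\lambda_0}$ there, the supremum equals $C\lambda^{\sigma/2}$ when $\sigma<1$ (attained at the inner scale) and $C\lambda^{1/2}$ when $\sigma\geq 1$; in either regime it is $C\lambda^\gamma$ with $\gamma=\tfrac{1}{2}\min\{\sigma,1\}$. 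The outer part of $\|\cdot\|_{**,\xi}$ is dominated by $C\lambda^{1/2}$ via the decay $\e^{U_\alpha}|\varphi_\lambda|\leq C\lambda^{1/2}|x-\xi|^{-3}$ at infinity and $\beta<1$. Hence $\|E_\lambda\|_{**,\xi}\leq C\lambda^\gamma$. Applying $|\e^t-1-t|\leq\tfrac{1}{2}t^2\e^{|t|}$ to $t=\phi-\varphi_\lambda$ and repeating the computations that lead to \eqref{37} and \eqref{42} in this setting then yields
\[
\|M(\phi)\|_{**,\xi}\leq C(\|\phi\|_{*,\xi}^2+\lambda^\gamma)\e^{\|\phi\|_{*,\xi}},
\]
\[
\|M(\phi_1)-M(\phi_2)\|_{**,\xi}\leq C(\rho+\lambda^\gamma)\e^\rho\|\phi_1-\phi_2\|_{*,\xi}
\]
for $\phi,\phi_1,\phi_2\in\mathcal{F}$.

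Combining these with the continuity bound of Proposition~\ref{44a},
\[
\|\mathcal{A}(\phi)\|_{*,\xi}\leq C\bigl(\|M(\phi)\|_{**,\xi}+\|E_\lambda\|_{**,\xi}\bigr)\leq C\bigl(\rho^2\e^\rho+\lambda^\gamma(1+\e^\rho)\bigr),
\]
we choose $\rho=K\lambda^\gamma$ with $K$ large enough and then $\Lambda$ small enough so that $\mathcal{A}$ maps $\mathcal{F}$ into itself and is a strict contraction there. The Banach fixed point theorem supplies $\phi_{\lambda,\xi}\in\mathcal{F}$ solving \eqref{43}, and feeding the fixed point back into Proposition~\ref{44a} gives $\max_i|c_i(\lambda,\xi)|\leq C\|M(\phi_{\lambda,\xi})+E_\lambda\|_{**,\xi}\leq C\lambda^\gamma$, which is \eqref{47a}.

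The main obstacle is not the contraction argument itself---which is a routine translation of Section~6---but tracking the sharp exponent $\gamma=\tfrac{1}{2}\min\{\sigma,1\}$. The weaker decay of $\varphi_\lambda$ in dimension three ($|x-\xi|^{-1}$ instead of $|x-\xi|^{2-N}$) couples the weight $|x-\xi|^\sigma$ with the inner scale $|x-\xi|\sim\sqrt{\lambda}$, forcing the saturation $\gamma=1/2$ when $\sigma\geq 1$ and preventing the uniform exponent $\sigma/2$ available for $N\geq 4$.
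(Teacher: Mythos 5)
Your proof is correct and is precisely the argument the paper alludes to when it writes ``using a similar scheme to Section~6, from Proposition~\ref{44a} we can prove the existence of solutions to problem \eqref{43}\ldots we omit the details''; you have supplied those details, and in particular you correctly trace the exponent $\gamma=\tfrac{1}{2}\min\{\sigma,1\}$ to the inner supremum $\sup_{\delta\sqrt{\lambda/\lambda_0}\le|x-\xi|\le 1}\lambda^{1/2}|x-\xi|^{\sigma-1}$, which saturates at the inner scale when $\sigma<1$ and at $|x-\xi|=1$ when $\sigma\ge 1$. The only cosmetic difference from Section~6 is that you take $\rho=K\lambda^\gamma$ directly instead of fixing a small $\rho$ and extracting the sharper bound a posteriori from the fixed-point identity $\phi=\mathcal{A}(\phi)$; both routes are equivalent and standard.
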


Next we make a remark on how to recognize when $c_i=0$ in Eq.~\eqref{44}.
\begin{lemma}
There is $\epsilon_0>0$ small such that if $\lambda<\epsilon_0$ and $\phi$ is a solution to \eqref{44} such that $\|\phi\|_{*,\xi}<+\infty$, $\|h\|_{**,\xi}<+\infty$, then $c_i=0$ for all $i=1,2,3$ if and only if
\begin{equation*}
\int\limits_{\partial\D_{\lambda,\xi}}\!\dfrac{\partial\phi}{\partial n}\dfrac{\partial U_\alpha}{\partial x_i}\,\mathrm{dS}(x)+\int\limits_{\R^3\setminus\D_{\lambda,\xi}}\! h\dfrac{\partial U_\alpha}{\partial x_i}\,\mathrm{d}x=0\quad\text{for all}\ i=1,2,3.
\end{equation*}
\end{lemma}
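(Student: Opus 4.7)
The natural plan is to test the equation against $\partial U_\alpha/\partial x_i$ and use that these functions span the kernel of the linearized operator. Specifically, since $U_\alpha$ solves $-\Delta U_\alpha = \lambda_0 e^{U_\alpha}$, differentiating in $x_i$ gives
\[
\Delta\!\left(\tfrac{\partial U_\alpha}{\partial x_i}\right) + \lambda_0 e^{U_\alpha} \tfrac{\partial U_\alpha}{\partial x_i} = 0 \quad \text{in } \R^3.
\]
I would apply Green's second identity to $\phi$ and $v_i := \partial U_\alpha/\partial x_i$ on $\Omega := \R^3 \setminus \overline{\D}_{\lambda,\xi}$, splitting the boundary into $\partial\D_{\lambda,\xi}$ and $\partial B_R$ with $R\to\infty$. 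The volume integrand $\phi\Delta v_i - v_i \Delta \phi$ collapses (the two copies of $\lambda_0 e^{U_\alpha}\phi\, v_i$ cancel) to $-v_i\bigl(h + \sum_j c_j\Phi_j\bigr)$, so one obtains, after using $\phi = 0$ on $\partial\D_{\lambda,\xi}$,
\[
\int_{\partial\D_{\lambda,\xi}}\!\tfrac{\partial\phi}{\partial n}\,v_i\,\mathrm{d}S + \int_{\Omega}\!h\,v_i\,\mathrm{d}x \;=\; -\sum_{j=1}^{3} c_j\, M_{ij}(\lambda,\xi),
\]
with $M_{ij}(\lambda,\xi) = \int_\Omega \Phi_j\, v_i\,\mathrm{d}x$ (up to an overall sign depending on the orientation convention for $n$).

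The first technical step is to verify that the spherical boundary integral over $\partial B_R$ vanishes as $R\to\infty$. Using $|v_i| \lesssim r^{-1}$ and $|\nabla v_i| \lesssim r^{-2}$ from the asymptotics $U_\alpha(r) = -2\log r + o(1)$, together with the weighted bound $|\phi(x)|\le C|x|^{-\beta}$ from $\|\phi\|_{*,\xi} < \infty$ and the standard gradient estimate $|\nabla \phi(x)|\lesssim |x|^{-1-\beta}$ (since $h$ decays fast enough by $\|h\|_{**,\xi}<\infty$), each of the two boundary pieces is $O(R^{-\beta})$, which tends to zero because $\beta>0$. This justifies the identity above.

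The heart of the argument is then to show that the $3\times 3$ matrix $M(\lambda,\xi)$ is invertible for $\lambda$ sufficiently small (uniformly in $|\xi|\le Z$), so that the left-hand side of the displayed identity vanishes for every $i$ if and only if every $c_j$ vanishes. As $\lambda\to 0^+$ the domain $\D_{\lambda,\xi}$ shrinks to the point $\xi$, and the singularity of $v_i$ at $\xi$ is harmless because $v_i\in L^2_{\mathrm{loc}}$ in $\R^3$ (it is bounded near $\xi$ since $U_\alpha$ is smooth). Hence
\[
M_{ij}(\lambda,\xi) \;\longrightarrow\; \int_{\R^3} \eta(x)\,\tfrac{\partial U_\alpha}{\partial x_i}\tfrac{\partial U_\alpha}{\partial x_j}\,\mathrm{d}x \quad \text{as } \lambda\to 0^+.
\]
Since $U_\alpha$ is radial and $\eta$ radial, writing $\partial U_\alpha/\partial x_i = U_\alpha'(r)\,x_i/r$ and using $\int_{S^2} \theta_i\theta_j\,\mathrm{d}S = \tfrac{4\pi}{3}\delta_{ij}$, the limit matrix reduces to $\kappa\,\mathrm{Id}$ with $\kappa = \tfrac{1}{3}\int_{\R^3}\eta(|U_\alpha'|)^2\,\mathrm{d}x > 0$. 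By continuity $M(\lambda,\xi)$ remains invertible for small $\lambda$, and the equivalence claimed in the lemma follows.

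The only delicate point is the justification of the spherical boundary vanishing; the rest is algebra and a perturbation argument on $M$. I would carry out the steps in the order: (i) Green's identity and cancellation, (ii) decay-based vanishing at infinity, (iii) computation of the limit of $M_{ij}$ and invertibility, (iv) conclusion.
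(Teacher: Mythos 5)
Your proposal is correct and follows essentially the same line as the paper: multiply \eqref{44} by $\partial U_\alpha/\partial x_j$, integrate by parts on $B_R\setminus\D_{\lambda,\xi}$, use the decay $|\phi|\lesssim|x|^{-\beta}$ and $|\nabla\phi|\lesssim|x|^{-\beta-1}$ (from $\|\phi\|_{*,\xi}<\infty$ plus elliptic estimates) together with $|\partial_i U_\alpha|\lesssim r^{-1}$, $|\nabla\partial_i U_\alpha|\lesssim r^{-2}$ to kill the spherical boundary term as $R\to\infty$, and conclude via the invertibility for small $\lambda$ of the matrix $M_{ij}=\int_{\R^3\setminus\D_{\lambda,\xi}}\Phi_j\,\partial_i U_\alpha\,\mathrm{d}x$. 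Your explicit computation that the limit matrix equals $\kappa\,\mathrm{Id}$ with $\kappa=\tfrac13\int\eta\,|U_\alpha'|^2>0$ is a small but genuine addition; the paper simply asserts the limit matrix is invertible, whereas you justify it via radial symmetry, which is a worthwhile detail to include.
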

\begin{proof}
Since $\partial_{x_j}U_\alpha$ satisfies the linear homogeneous equation in $\R^3$, multiplying \eqref{44} by this function and integrating by parts in $B_R(0)\setminus\D_{\lambda,\xi}$, where $R$ is large, we obtain 
\begin{equation}\label{48}
 \begin{aligned}
  \int\limits_{\partial(B_R(0)\setminus\D_{\lambda,\xi})}\!\left(\dfrac{\partial\phi}{\partial n}\dfrac{\partial   U_\alpha}{\partial x_j}\right.&-\left.\phi\dfrac{\partial}{\partial n}\dfrac{\partial U_\alpha}{\partial x_j}\right)\,\mathrm{dS}(x)\\
  &=\int\limits_{B_R(0)\setminus\D_{\lambda,\xi}}\!\left(h+\sum_{i=1}^3c_i\Phi_i\right)\dfrac{\partial U_\alpha}{\partial x_j}\,\mathrm{d}x.
 \end{aligned}
\end{equation}

Since $\|\phi\|_{*,\xi}<+\infty$ we see that 
\[|
\phi(x)|\leq C|x|^{-\beta}\quad\text{for all}\ |x|\geq R'.
\]
A scaling argument and elliptic estimates show that 
\[
|\nabla\phi(x)|\leq C|x|^{-\beta-1}\quad\text{for all}\ |x|\geq R',
\] 
where $R'>0$ is a large fixed number. Thus
\[
\left|\dfrac{\partial\phi}{\partial n}\dfrac{\partial U_\alpha}{\partial x_j}-\phi\dfrac{\partial}{\partial n}\dfrac{\partial U_\alpha}{\partial x_j}\right|\leq C|x|^{-\beta-2}\quad\text{for all}\ |x|\geq R',
\]
and hence
\[
\lim_{R\to+\infty}\int\limits_{\partial B_R(0)}\left(\dfrac{\partial\phi}{\partial n}\dfrac{\partial U_\alpha}{\partial x_j}-\phi\dfrac{\partial}{\partial n}\dfrac{\partial U_\alpha}{\partial x_j}\right)\,\mathrm{dS}(x)=0.
\]

Letting $R\to+\infty$ in \eqref{48} yields
\[
\sum_{i=1}^3c_i\int\limits_{\R^3\setminus\D_{\lambda,\xi}}\Phi_i\dfrac{\partial U_\alpha}{\partial x_j}\,\mathrm{d}x=-\int\limits_{\R^3\setminus\D_{\lambda,\xi}}h\dfrac{\partial U_\alpha}{\partial x_j}\,\mathrm{d}x-\int\limits_{\partial\D_{\lambda,\xi}}\dfrac{\partial\phi}{\partial n}\dfrac{\partial U_\alpha}{\partial x_j}\,\mathrm{dS}(x).
\]
For $\lambda>0$ sufficiently small the matrix with entries $\int_{\R^3\setminus\D_{\lambda,\xi}}\Phi_i\frac{\partial U_\alpha}{\partial x_j}\,\mathrm{d}x$ is close to $\int_{\R^3}\Phi_i\frac{\partial U_\alpha}{\partial x_j}\,\mathrm{d}x$ which is invertible. The lemma follows.
\end{proof}

\textit{Seeking} $c_i=0$. Finally we show how to find a $\xi=\xi(\lambda)$, $\lambda>0$ small, such that 
\[
 c_i(\lambda,\xi)=0\quad\text{for all}\ i=1,2,3,
\] 
and thereby prove Theorem~\ref{42a}. 

Let us assume $0\in\D$ and $\sigma\in(1,2)$. We have found a solution $\phi_{\lambda,\xi}$, $c_1(\lambda,\xi)$,  $c_2(\lambda,\xi)$, $c_3(\lambda,\xi)$ to \eqref{43}. By the previous lemma, for all $\lambda$ small $c_1=c_2=c_3=0$ if and only if 
\begin{equation}
\int\limits_{\R^3\setminus\D_{\lambda,\xi}}\!(E_\lambda+M(\phi_{\lambda,\xi}))\dfrac{\partial U_\alpha}{\partial x_i}\,\mathrm{d}x+\int\limits_{\partial\D_{\lambda,\xi}}\!\dfrac{\partial\phi_{\lambda,\xi}}{\partial n}\dfrac{\partial U_\alpha}{\partial x_i}\,\mathrm{dS}(x)=0
\end{equation}
for all $i=1,2,3$. 

Let us define
\begin{equation}\label{49}
G_j(\xi)=\int\limits_{\R^3\setminus\D_{\lambda,\xi}}\!(E_\lambda+M(\phi_{\lambda,\xi}))\dfrac{\partial U_\alpha}{\partial x_i}\,\mathrm{d}x+\int\limits_{\partial\D_{\lambda,\xi}}\!\dfrac{\partial\phi_{\lambda,\xi}}{\partial n}\dfrac{\partial U_\alpha}{\partial x_i}\,\mathrm{dS}(x).
\end{equation} 
Using local uniqueness, the fixed point characterization of $\phi_\lambda$ and elliptic estimates, one can prove that the functions $G_j$ are continuous; we omit the details. 

Recalling the definition of $f_0$ in \eqref{9b}, we claim that
\begin{equation}\label{49a}
G_j(\xi)=f_0\lambda^{1/2}\int\limits_{\R^3}|x-\xi|^{-1}\e^{U_\alpha}\dfrac{\partial U_\alpha}{\partial x_j}\,\mathrm{d}x+o(\lambda^{1/2})
\end{equation}
uniformly on compacts sets of $\R^3$. Then, for $\lambda$ small $G_j(\xi)\sim f_0\lambda^{1/2}\frac{\partial U_\alpha}{\partial x_j}(\xi)$, so we can expect that there exists $\xi$ annulling the functions $G_j$, $j=1,2,3$. 

We first observe that
\begin{equation}\label{49b}
\int\limits_{\R^3\setminus\D_{\lambda,\xi}}M(\phi_{\lambda,\xi})\dfrac{\partial U_\alpha}{\partial x_i}\,\mathrm{d}x=O(\lambda^{\sigma/2}).
\end{equation}
Indeed,
\begin{equation*}
\int\limits_{\R^3\setminus\D_{\lambda,\xi}}\!\left|M(\phi_{\lambda,\xi})\dfrac{\partial U_\alpha}{\partial x_i}\right|\,\mathrm{d}x=\int\limits_{B_1(\xi)\setminus\D_{\lambda,\xi}}\!\cdots\,\mathrm{d}x+\int\limits_{\R^3\setminus B_1(\xi)}\!\cdots\,\mathrm{d}x;
\end{equation*}
by \eqref{38} and \eqref{47a},
\begin{align*}
\int\limits_{B_1(\xi)\setminus\D_{\lambda,\xi}}\left|M(\phi_{\lambda,\xi})\dfrac{\partial U_\alpha}{\partial x_i}\right| \,\mathrm{d}x&\leq C(\|\phi\|_{*,\xi}^2+\lambda^{\sigma/2})\e^{\|\phi\|_{*,\xi}}\int\limits_{B_1(\xi)\setminus\D_{\lambda,\xi}}|x-\xi|^{-\sigma} \,\mathrm{d}x\\
&\leq C(\lambda+\lambda^{\sigma/2})\e^{\lambda^{1/2}}.
\end{align*}
Likewise, \eqref{39} and \eqref{47a} yield
\begin{align*}
\int\limits_{\R^3\setminus B_1(\xi)}\left|M(\phi_{\lambda,\xi})\dfrac{\partial U_\alpha}{\partial x_i}\right|\,\mathrm{d}x&\leq C(\|\phi\|_{*,\xi}^2+\lambda)\e^{\|\phi\|_{*,\xi}}\int\limits_{\R^3\setminus B_1(\xi)}|x-\xi|^{-3-\beta} \,\mathrm{d}x\\
&\leq C\lambda\e^{\lambda^{1/2}},
\end{align*}
These inequalities prove \eqref{49b}.

On the other hand, we need to estimate the boundary integral of \eqref{49}. We claim that
\begin{equation}\label{50}
\left|\dfrac{\partial\phi_{\lambda,\xi}}{\partial n}(x)\right|=O(\lambda^{-1/2}),\quad\text{uniformly for}\ x\in\partial\D_{\lambda,\xi}.
\end{equation}
In fact, 
\[
 \widetilde{\phi}_{\lambda,\xi}(y)=\phi_{\lambda,\xi}\left(\xi+\sqrt{\dfrac{\lambda }{\lambda_0}}y\right)\quad\text{for all}\ y\in\R^3\setminus\D.
\]
By \eqref{47a}, we have
\[
|\widetilde{\phi}_{\lambda,\xi}(y)|\leq C\lambda^{1/2}\quad\text{for all}\ |y|\leq\sqrt{\dfrac{\lambda_0}{\lambda}}.
\]
Likewise, by \eqref{47a} and the definition of the norm $\|\|_{*,\xi}$, we see that $\phi_{\lambda,\xi}$ is uniformly bounded. Furthermore, \eqref{43} implies that $|\Delta\phi_{\lambda,\xi}|\leq C$ in $\R^N\setminus\D_{\lambda,\xi}$, thereby $\widetilde{\phi}_{\lambda,\xi}$ satisfies \[|\Delta\widetilde{\phi}_{\lambda,\xi}|\leq C\lambda.\]
By elliptic estimates \[\sup_{\partial\D}|\nabla\widetilde{\phi}_{\lambda,\xi}|\leq C\lambda^{1/2},\]
which proves \eqref{50}. Using the last inequality we derive
\[
\int\limits_{\partial\D_{\lambda,\xi}}\dfrac{\partial\phi_{\lambda,\xi}}{\partial n}\dfrac{\partial U_\alpha}{\partial x_i}\,\mathrm{dS}(x)=O(\lambda).
\]
This fact together \eqref{49b} prove the claim made in \eqref{49a}.

Finally, let us consider the vector field 
\[
G(\xi)=(G_1(\xi),G_2(\xi),G_3(\xi)).
\]
$G$ is continuous and, thanks to \eqref{49a}, 
\[
G(\xi)\cdot\xi<0\quad\text{for all } |\xi|=R,
\]
for any fixed small $R>0$. Using this and degree theory we obtain the existence of $\xi$ such that $c_1=c_2=c_3=0$. Which is the desired conclusion.
\qed

\bigskip
\noindent
{\bf Acknowledgment.} J.D. was supported by  Fondecyt  1090167, CAPDE-Anillo ACT-125 and
Fondo Basal CMM. L.L. was supported by a doctoral grant of CONICYT Chile.


\providecommand{\bysame}{\leavevmode\hbox to3em{\hrulefill}\thinspace}
\providecommand{\MR}{\relax\ifhmode\unskip\space\fi MR }
\providecommand{\MRhref}[2]{%
  \href{http://www.ams.org/mathscinet-getitem?mr=#1}{#2}
}
\providecommand{\href}[2]{#2}

\end{document}